\numberwithin{equation}{section}
\newtheorem{theorem}{Theorem}[section]
\newtheorem{lemma}[theorem]{Lemma}
\newtheorem{proposition}[theorem]{Proposition}
\theoremstyle{remark}
\newtheorem{remark}[theorem]{Remark}
\renewcommand{\l}{\left}
\renewcommand{\r}{\right}
\newcommand{\ml}{\mathcal}
\newcommand{\scA}{\mathscr{A}}
\newcommand{\R}{\mathbb{R}}
\newcommand{\C}{\mathbb{C}}
\newcommand{\N}{\mathbb{N}}
\newcommand{\ol}{\overline}
\newcommand{\om}{\omega}
\newcommand{\lam}{\lambda}
\newcommand{\pt}{\partial}
\newcommand{\del}{\partial}
\newcommand{\Del}{\Delta}
\newcommand{\ce}{\mathrel{\mathop:}=}
\newcommand{\wto}{\rightharpoonup}
\newcommand{\til}{\widetilde}
\renewcommand{\Re}{\operatorname{Re}}
\newcommand{\eps}{\varepsilon}
\newcommand{\bmat}[1]{\begin{bmatrix} #1 \end{bmatrix}}
\def\rbra[#1,#2]{\left( #1 , #2 \right)} 
\def\norm[#1]{\left\Vert #1 \right\Vert}
\def\abs[#1]{\left\vert #1 \right\vert}
\renewcommand{\section}{%
\@startsection{section}{1}%
  \z@{.7\linespacing\@plus\linespacing}{.5\linespacing}%
 {\normalfont\large\bfseries\centering}}
\begin{document}

\title[]{Traveling waves for a nonlinear Schr\"odinger system with quadratic interaction}

\author[N. Fukaya]{Noriyoshi Fukaya}
\address{Department of Mathematics, Tokyo University of Science, Tokyo, 162-8601, Japan}
\email{fukaya@rs.tus.ac.jp}

\author[M. Hayashi]{Masayuki Hayashi}
\address{Department of Mathematics, Kyoto University, Kyoto 606-8502, Japan}
 \email{hayashi.masayuki.3m@kyoto-u.ac.jp}

\author[T. Inui]{Takahisa Inui}
\address{Department of Mathematics, Graduate School of Science, Osaka University, Osaka 560-0043, Japan
\newline 
University of British Columbia, 1984 Mathematics Rd., Vancouver, V6T1Z2, Canada}
\email{inui@math.sci.osaka-u.ac.jp}

\dedicatory{Dedicated to Professor Tohru Ozawa on his 60th birthday}

\begin{abstract}
We study traveling wave solutions for a nonlinear Schr\"odinger system with quadratic interaction. For the non mass resonance case, the system has no Galilean symmetry, which is of particular interest in this paper. We construct traveling wave solutions by variational methods and see that for the non mass resonance case there exist specific traveling wave solutions which correspond to the solutions for ``zero mass" case in nonlinear elliptic equations.
We also establish the new global existence result for oscillating data as an application. Both of our results essentially come from the lack of Galilean invariance in the system.
\end{abstract}

\maketitle


\section{Introduction}
\subsection{Setting of the problem}

We consider the system of nonlinear Schr\"odinger (NLS) equations with quadratic interaction
\begin{align}
\label{eq:1.1}
\l\{
\begin{aligned}
i\del_t u+\frac{1}{2m}\Delta u&=\overline{u}v,
\\
i\del_t v+\frac{1}{2M}\Delta v&=u^2,
\end{aligned}
\r.
\quad (t,x)\in\R\times\R^d,
~m,M>0,
\end{align}
where $(u,v)$ is a $\C^2$-valued unknown function, and $m$ and $M$ are mass constants. The system \eqref{eq:1.1} is a simplified model describing Raman amplification phenomena in a plasma (see \cite{CCO09}). For more details on quadratic interaction in physical contexts, see \cite{CMS16} and references therein.
Under the transformation
\begin{align*}
u\mapsto \sqrt{ \frac{1}{2} }u\l(t, \sqrt{\frac{1}{2m}}x \r),
v\mapsto  -\frac{1}{2} v\l(t, \sqrt{\frac{1}{2m}}x \r),
\end{align*}
the system \eqref{eq:1.1} is reduced to the system 
\begin{equation} \label{NLS}
  \left\{\begin{alignedat}{1}
  &i\pt_t u + \Del u 
  = -2v\ol{u},
\\&i\pt_t v + \kappa\Del v 
  = -u^2,
  \end{alignedat}\right.\quad 
  (t,x)\in\R\times\R^d,
\end{equation}
where $\kappa=m/M>0$ is the mass ratio. 

The system~\eqref{NLS} has the following conserved quantities:
\begin{align*}
 \tag{Energy}
 E(u,v)
  &\ce\frac12\|\nabla u\|_{L^2}^2
  +\frac{\kappa}{2}\|\nabla v\|_{L^2}^2
  -\Re\int_{\R^d} u^2\ol{v}\,dx,
\\
\tag{Charge}
Q(u,v)
  &\ce\frac12\|u\|_{L^2}^2+\|v\|_{L^2}^2,
\\
\tag{Momentum}
P(u,v)
  &\ce{}^t(P_1(u,v),\dots,P_d(u,v))
  =\frac12(i\nabla u,u)_{L^2}
  +\frac12(i\nabla v,v)_{L^2}.
\end{align*}
Here we regard $L^2(\R^d)$ as a real Hilbert space equipped with the inner product
\begin{align*}
\rbra[u,v]_{L^2}
=\Re\int_{\R^d} u\overline{v} \,dx.
\end{align*} 
We note that \eqref{NLS} can be rewritten as the Hamiltonian form
\begin{align*}
\del_t (u,v)=JE'(u,v),
\end{align*}
where $J$ is the skew symmetric operator defined by
\begin{align*}
J=\bmat{-i & 0\\ 0 & -i}.
\end{align*}
We consider the system \eqref{NLS} for $1\le d\le 5$, which corresponds to the energy-subcritical case. In this case, it is well-known (see \cite{HOT13}) that \eqref{NLS} is locally well-posed in the energy space $H^1(\R^d)\times H^1(\R^d)$, and the three quantities above (energy, charge, and momentum) of the solution are conserved by the flow.

The system \eqref{NLS} is invariant under the gauge transform $(u,v)\mapsto (e^{i\theta}u,e^{2i\theta}v)$ for $\theta\in\R$ and the scaling
\begin{align*}
(u,v)\mapsto (\lambda^2u(\lambda^2t,\lambda x), \lambda^2v(\lambda^2t,\lambda x))\quad\text{for}~\lambda>0.
\end{align*}
We note that the spatial Sobolev $\dot{H}^{s_c}$ norm with $s_c\ce\frac{d}{2}-2$ is invariant under this scaling. In particular, \eqref{NLS} is $L^2$-critical if $d=4$, and energy-critical if $d=6$. 
When $\kappa=1/2$ (i.e., $M=2m$), which is called the mass resonance condition, the system \eqref{NLS} is invariant under the Galilean transformation
\begin{align}
\label{eq:1.3}
(u,v)\mapsto \l(e^{\frac{i}{2}c\cdot x-\frac{i}{4}|c|^2t}u(t,x-ct), e^{ic\cdot x-\frac{i}{2}|c|^2t}v(t,x-ct)\r)
\end{align}
for any $c\in\R^d$. Moreover, when $\kappa=1/2$ and $d=4$, the equation \eqref{NLS} is also invariant under the pseudo-conformal transformation
\begin{align*}
(u,v)\mapsto 
\l(\frac{1}{A(t)^{2}}e^{\frac{i\beta |x|^{2}}{4A(t)}} u\left(\frac{B(t)}{A(t)}, \frac{x}{A(t)} \right), \frac{1}{A(t)^{2}}e^{\frac{i\beta |x|^{2}}{2A(t)}}v\left(\frac{B(t)}{A(t)}, \frac{x}{A(t)} \right)\r),
\end{align*}
where $A(t)=\alpha+\beta t, B(t)=\gamma + \delta t$, and $\alpha,\beta,\gamma,\delta \in \R$ with $\alpha \delta -\beta\gamma=1$. 
On the other hand, if the mass resonance condition is violated, the system has neither Galilean nor pseudo-conformal symmetry.
From a mathematical point of view, it is an important question how the lack of symmetries in evolution equations effects on global behavior of solutions. Here we are particularly interested in traveling wave solutions of \eqref{NLS} under the non mass resonance condition.

\subsection{Traveling wave solutions}

Traveling wave solutions of \eqref{NLS} have been little studied in previous literature. In \cite{HOT13}, the standing wave solutions of \eqref{NLS} in the form of
\begin{align*}
  (u_{\om}(t,x),v_{\om}(t,x) )
  =(e^{i\om t}\phi_{\om}(x), e^{i2\om t}\psi_{\om}(x))
\end{align*}
are studied for any $\kappa>0$. 
When $\kappa=1/2$, one can generate traveling wave solutions from the standing wave solutions through the Galilean transformation \eqref{eq:1.3}. On the other hand, when $\kappa\neq1/2$, such a construction does not work because of the lack of Galilean symmetry, so the existence itself of traveling wave solutions is a nontrivial problem. 

We consider traveling wave solutions of \eqref{NLS} in the form of
\begin{equation} \label{eq:SW}
  (u_{\om,c}(t,x),v_{\om,c}(t,x) )
  =(e^{i\om t}\phi_{\om,c}(x-c t), e^{i2\om t}\psi_{\om,c}(x-c t)).
\end{equation}
For $(\om,c)\in\R\times \R^d$, \eqref{eq:SW} is a solution of \eqref{NLS} if and only if $(\phi_{\om,c},\psi_{\om,c})$ is a solution of the system of stationary equations 
\begin{equation} 
\label{eq:1.5}
    \left\{\begin{aligned}
    -\Delta \phi
    +\om \phi
    +ic\cdot\nabla \phi
    -2\psi\ol{\phi}&=0,
\\  -\kappa\Delta \psi
    +2\om \psi+ic\cdot\nabla \psi
    -\phi^2&=0,
    \end{aligned}\right.
\quad x\in\R^d.
\end{equation}
The action functional with respect to \eqref{eq:1.5} is defined by 
\begin{align}
\label{eq:1.6}
  S_{\om,c}(u,v)
  ={}&E(u,v)
  +\om Q(u,v)
  +c\cdot P(u,v).
\end{align}
For the sake of argument, we rewrite the action functional as
\begin{align}
\label{eq:1.7}
  S_{\om,c}(u,v)
  ={}&\frac{1}{2}\|\nabla( e^{-\frac{i}{2}c\cdot x}u)\|_{L^2}^2
  +\frac12\Bigl(\om-\frac{|c|^2}{4}\Bigr)\|u\|_{L^2}^2
\\ \notag
&+\frac{\kappa}{2}\|\nabla (e^{-\frac{i}{2\kappa}c\cdot x}v)\|_{L^2}^2
  +\Bigl(\om-\frac{|c|^2}{8\kappa}\Bigr)\|v\|_{L^2}^2
  -\Re\int_{\R^d}u^2\ol{v}\,dx.
\end{align}
We define the function space $X_{\om,c}$ by
\begin{align}
\label{eq:1.8}
X_{\om,c}
  = \{(u,v): 
  (e^{-\frac{i}{2}c\cdot x}u,e^{-\frac{i}{2\kappa}c\cdot x}v)\in\til{X}_{\om,c}\},
\end{align} 
where 
\begin{align}
\label{eq:1.9}
  \til{X}_{\om,c}=\til{X}_{\kappa,\om,c}
  \ce
  \l\{
  \begin{aligned}
  & H^1(\R^d)\times H^1(\R^d),
  &&\kappa>0,\,\om>\max\l\{\frac{|c|^2}{4}, \frac{|c|^2}{8\kappa}\r\},
\\&H^1(\R^d)\times \dot{H}^1(\R^d), 
  &&0<\kappa<\frac{1}{2},\ \om=\frac{|c|^2}{8\kappa},~c\neq0,
\\&\dot{H}^1(\R^d)\times H^1(\R^d),
  &&\kappa>\frac{1}{2},\ \om=\frac{|c|^2}{4},~c\neq0.
 \end{aligned}
 \r.
\end{align}
We note that $S_{\om,c}$ is defined on $X_{\om,c}$ if
\begin{align}
  \label{eq:1.10}
  \l\{
  \begin{aligned}
    &\text{(A)}&
    &1\le d\le 5,&
    &\kappa>0,&
    &\om>\max\l\{\frac{|c|^2}{4}, \frac{|c|^2}{8\kappa}\r\},
  \\&\text{(B)}&
    &3\le d\le 5,&
    &0<\kappa<\frac{1}{2},&
    &\om=\frac{|c|^2}{8\kappa},~c\neq0,
  \\&\text{(C)}&
    &4\le d\le 5,&
    &\kappa>\frac{1}{2},&
    &\om=\frac{|c|^2}{4},~c\neq0.
  \end{aligned}
  \r.
  \end{align}
The dimensional conditions in {\rm(B)} and {\rm(C)} come from the Sobolev embedding $\dot{H}^{1}(\mathbb{R}^{d}) \hookrightarrow L^{2^{*}}(\mathbb{R}^{d})$ ($2^{*}:=2d/(d-2)$, $d\geq 3$), which is used to control the nonlinear term. 
We denote the set of all nontrivial solutions of \eqref{eq:1.5} by
\begin{align*}
  \ml{A}_{\om,c}
  &=\{(\phi,\psi)\in X_{\om,c}:
  (\phi,\psi)\ne(0,0),\ S_{\om,c}'(\phi,\psi)=0\}
\end{align*}
and the set of all ground states by
\begin{align*}
  \ml{G}_{\om,c}
  &=\{(\phi,\psi)\in \ml{A}_{\om,c}:
  S_{\om,c}(\phi,\psi)\le S_{\om,c}(\zeta,\eta) 
  \text{ for all }(\zeta,\eta)\in\ml{A}_{\om,c}\}.
\end{align*}

\subsection{Main results and comments}

In this paper we construct traveling wave solutions of \eqref{NLS} for any $\kappa>0$ and also establish the new global existence result in the energy space as an application. The first main result concerns the existence of traveling wave solutions. 
\begin{theorem}\label{thm:1.1}
Assume \eqref{eq:1.10} and assume further $d=5$ for the case {\rm(C)}. Then $\ml{G}_{\om,c}\ne\emptyset$. In particular, there exist traveling wave solutions of \eqref{NLS} in the form of \eqref{eq:SW}.
\end{theorem}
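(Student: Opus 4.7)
The plan is to realize ground states of \eqref{eq:1.5} as minimizers of the action $S_{\om,c}$ on a Nehari-type manifold in $X_{\om,c}$, exploiting the translation invariance of the stationary problem. As a preparatory step I would pass to the gauged functions $\til u\ce e^{-\frac{i}{2}c\cdot x}u$ and $\til v\ce e^{-\frac{i}{2\kappa}c\cdot x}v$; by \eqref{eq:1.7} the quadratic part of $S_{\om,c}$ then becomes the manifestly nonnegative form
\[
  \frac{1}{2}\|\nabla\til u\|_{L^2}^2+\frac{1}{2}\l(\om-\frac{|c|^2}{4}\r)\|\til u\|_{L^2}^2+\frac{\kappa}{2}\|\nabla\til v\|_{L^2}^2+\l(\om-\frac{|c|^2}{8\kappa}\r)\|\til v\|_{L^2}^2,
\]
which is coercive on $H^1\times H^1$ in case (A); in cases (B) and (C) exactly one mass coefficient vanishes and the corresponding component is only controlled in $\dot H^1$, while the cubic term $-\Re\int u^2\ol v\,dx$ becomes, in $(\til u,\til v)$, a form with the residual oscillation $e^{i(1-\frac{1}{2\kappa})c\cdot x}$ that witnesses the absence of Galilean symmetry.

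Next I would introduce the Nehari functional $K_{\om,c}(u,v)\ce\langle S_{\om,c}'(u,v),(u,v)\rangle$. Cubic homogeneity of the nonlinearity gives $S_{\om,c}=\frac{1}{3}Q_{\om,c}$ on the set
\[
  \ml N_{\om,c}\ce\{(u,v)\in X_{\om,c}\setminus\{(0,0)\}:K_{\om,c}(u,v)=0\},
\]
where $Q_{\om,c}$ is the quadratic form displayed above. Using Sobolev and Gagliardo--Nirenberg-type inequalities (with the embedding $\dot H^1\hookrightarrow L^{2^*}$ controlling $v$ in case (B) and $u$ in case (C), which is the origin of the dimensional conditions in \eqref{eq:1.10}), one checks that $\ml N_{\om,c}$ is bounded away from the origin in the $Q_{\om,c}$-norm and that $\mu_{\om,c}\ce\inf_{\ml N_{\om,c}}S_{\om,c}>0$.

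Then I would fix a minimizing sequence $\{(u_n,v_n)\}\subset\ml N_{\om,c}$; the identity $S_{\om,c}=\frac{1}{3}Q_{\om,c}$ on $\ml N_{\om,c}$ yields boundedness in $X_{\om,c}$, and I would apply Lions' concentration-compactness principle to the density naturally attached to the gauge-transformed variables. Vanishing is ruled out by combining the Lions vanishing lemma with the lower bound on $\Re\int u_n^2\ol{v_n}\,dx$ forced by $K_{\om,c}(u_n,v_n)=0$ together with $\mu_{\om,c}>0$; dichotomy is ruled out by a strict subadditivity argument based on the homogeneity of $S_{\om,c}|_{\ml N_{\om,c}}$ under the Nehari rescaling. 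After translating along a well-chosen sequence of points, a Brezis--Lieb-type argument then yields a weak--strong limit $(\phi,\psi)\in\ml N_{\om,c}$ with $S_{\om,c}(\phi,\psi)=\mu_{\om,c}$. A standard Lagrange-multiplier argument (noting that $\langle K_{\om,c}'(\phi,\psi),(\phi,\psi)\rangle\ne 0$) promotes $(\phi,\psi)$ to a critical point, and since every nontrivial critical point of $S_{\om,c}$ lies on $\ml N_{\om,c}$, we conclude $(\phi,\psi)\in\ml G_{\om,c}$.

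The main obstacle lies in cases (B) and (C), which is precisely the novel non-mass-resonance regime highlighted in the abstract. There the loss of coercivity of $Q_{\om,c}$ in one component adds a scaling noncompactness to the usual translation noncompactness, mirroring the ``zero mass'' setting of Berestycki--Lions; this is exactly why $\dot H^1\hookrightarrow L^{2^*}$ must be available. I expect one will have to either enrich the Nehari constraint with a Pohozaev constraint that fixes the scale, or directly work with a Pohozaev-type minimization in the mixed space $H^1\times\dot H^1$ (resp.\ $\dot H^1\times H^1$), together with a refined Lions-type profile analysis to exclude both concentration and escape of mass along the dilation parameter. The restriction to $d=5$ in case (C) stated in Theorem~\ref{thm:1.1} presumably reflects a genuine threshold in this analysis.
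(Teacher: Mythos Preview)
Your Nehari-manifold strategy matches the paper's proof (Proposition~2.1 via Lemmas~2.3--2.11): the gauge change, the positivity $\mu_{\om,c}>0$ from Sobolev/Gagliardo--Nirenberg, the Brezis--Lieb splitting, translation compactness, and the Lagrange-multiplier step are all exactly what the paper does. Your anticipated obstacle in the final paragraph, however, is not real: cases~(B) and~(C) do \emph{not} introduce a scaling noncompactness, and no Pohozaev constraint or dilation-profile analysis is needed---the plain Nehari minimization works uniformly.

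The key observation you are missing is that in the mixed spaces $H^1\times\dot H^1$ (case~(B)) and $\dot H^1\times H^1$ (case~(C)) one component always stays in $H^1$, and the H\"older splitting of the cubic term can be arranged so that the non-vanishing forced by $\til N_c(u_n,v_n)\to 2\mu_{\om,c}>0$ lands on \emph{that} component in a strictly subcritical Lebesgue exponent. In case~(B) one writes $|\til N_c(u,v)|\le\|u\|_{L^{4d/(d+2)}}^2\|v\|_{L^{2^*}}$, so $\limsup_n\|u_n\|_{L^{4d/(d+2)}}>0$ with $\tfrac{4d}{d+2}\in(2,2^*)$ and $\{u_n\}$ bounded in $H^1$; in case~(C) with $d=5$ one writes $|\til N_c(u,v)|\le\|u\|_{L^{2^*}}^2\|v\|_{L^{d/2}}$, so $\limsup_n\|v_n\|_{L^{5/2}}>0$ with $\tfrac52\in(2,2^*)$ and $\{v_n\}$ bounded in $H^1$. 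Lieb's lemma applied to the $H^1$ component then yields a nontrivial weak limit after translation, and the remainder of your argument goes through verbatim. This is also exactly why $d=4$ is excluded in case~(C): there $d/2=2$ hits the endpoint and Lieb's lemma no longer applies (the paper's Remark~2.10).
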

\begin{remark}
The part of our proof of Theorem \ref{thm:1.1} does not work for the case {\rm(C)} with $d=4$. The difficulty in this case is the appearance of critical issue on concentration compactness arguments (see Lemma \ref{lem:2.9} and Remark \ref{rem:2.10}). 
\end{remark}


Theorem \ref{thm:1.1} is proved by variational approach, and in particular by solving minimization problems on Nehari manifolds. Our main contribution here is the existence result for the cases (B) and (C). For $(\phi,\psi)$ satisfying \eqref{eq:1.5}, let
\begin{align*}
\phi=e^{\frac{i}{2}c\cdot x}\til{\phi},\quad \psi=e^{\frac{i}{2\kappa}c\cdot x}\til{\psi}.
\end{align*}
Then $(\til{\phi},\til{\psi})$ satisfies
\begin{align}
\label{eq:1.11}
\l\{
\begin{aligned}
-\Delta\til{\phi}+\l(\omega-\frac{|c|^2}{4}\r)\til{\phi}
-2e^{i\l(\frac{1}{2\kappa}-1\r)c\cdot x}\til{\psi}\ol{\til{\phi} }=0,
\\
-\kappa\Delta\til{\psi}+2\l(\omega-\frac{|c|^2}{8\kappa}\r)\til{\psi}-
e^{i\l(1-\frac{1}{2\kappa}\r)c\cdot x}\til{\phi}^2=0,
\end{aligned}
\r.
\quad x\in\R^d.
\end{align}
For the cases (B) and (C), the coefficient of either $\til{\phi}$ or $\til{\psi}$ in \eqref{eq:1.11} vanishes, which corresponds to ``zero mass" case in nonlinear elliptic equations. Zero mass problems appear in various situations, such as in the energy-critical power problem 
\cite{A76, T76}, double power problems \cite{DD02, FH21, LN20, MP90, MM14}, and also in solitons of the derivative nonlinear Schr\"odinger equation \cite{FHI17, KW18, Wu15}.  
In most cases, the solutions of zero mass case have algebraic decay (see \cite{V81}), which contrasts with exponential decay in the massive case (see \cite{BeL83}). In our situation, for the case (B), it is naturally expected that $\til{\phi}$ decays exponentially from the first equation in \eqref{eq:1.11} and $\til{\psi}$ decays algebraically from the second equation. The case (C) is different from the case (B), and actually both $\til{\phi}$ and $\til{\psi}$ may decay algebraically. 
These are specific properties of the system and seem to be new in the context of solitary waves in nonlinear dispersive equations.

Our existence results for the cases (B) and (C) essentially come from the fact that the coefficient of $L^2$-norms in the action functional \eqref{eq:1.7} do not vanish at the same time. This is not true for the case $\kappa=1/2$, and actually one can prove the nonexistence for zero mass case with $\kappa=1/2$ (see Appendix \ref{sec:A}). Therefore, one can say that the lack of symmetries yields the new and nontrivial existence results. We note that our solutions of \eqref{eq:1.11} with $\kappa\neq1/2$ are essentially nonradial and complex-valued, which are distinguished from the previous works \cite{BeL83, BL84, SV81} on zero mass problems in nonlinear elliptic equations.   

Our second main result is the global existence result of \eqref{NLS} for the $L^{2}$-critical case ($d=4$). When $d=4$, we have the sharp Gagliardo--Nirenberg inequality
\begin{align}
\label{eq:1.12}
\l|\Re\int_{\R^4}u^2\overline{v}\,dx \r|
\le\frac{1}{2}\l(\frac{Q(u,v)}{Q(\phi_{1,0},\psi_{1,0})} \r)^{1/2}
\l(\norm[\nabla u]_{L^2}^2+\kappa\norm[\nabla v]_{L^2}^2 \r) 
\end{align}
for $(u,v)\in H^1(\R^4)\times H^1(\R^4)$ (see \cite{HOT13}), where $(\phi_{1,0},\psi_{1,0})\in \ml{G}_{1, 0}$ is a profile of standing wave solution of \eqref{NLS}. It follows from \eqref{eq:1.12} and conservation laws of energy and charge that if the initial data $(u_0,v_0)\in H^1(\R^4)\times H^1(\R^4)$ satisfies
\begin{align}
\label{eq:1.13}
Q(u_0,v_0)<Q(\phi_{1,0},\psi_{1,0}),
\end{align}
then the corresponding $H^1$-solution of \eqref{NLS} for $\kappa>0$ is global and bounded\footnote{The class of the solution lies in $(C\cap L^{\infty})(\R,H^1(\R^4)\times H^1(\R^4))$. Here if we say that the solution is global or exists globally in time, this means that the solution exists globally in both forward and backward time.}. The condition \eqref{eq:1.13} is sharp in general. Indeed, if the radial initial data $(u_0,v_0)\in H^1_{\rm rad}(\R^4)\times H^1_{\rm rad}(\R^4)$ satisfies
\begin{align}
\label{eq:1.14}
Q(u_0,v_0)>Q(\phi_{1,0},\psi_{1,0}),~E(u_0,v_0)<0,
\end{align}
then the corresponding $H^1$-solution blows up or grows up in infinite time\footnote{For the solution $(u,v)\in C([0,T_{\rm max}), H^1(\R^d)\times H^1(\R^d))$,  we say that the solution blows up if $T_{\rm max}<\infty$ and $\lim_{t\to T_{\rm max}}\|(\nabla u,\nabla v)(t)\|_{L^2}=\infty$, and grows up in infinite time if $T_{\rm max}=\infty$ and $\limsup_{t\to\infty}\|(\nabla u,\nabla v)(t)\|_{L^2}=\infty$. } (see \cite{IKN20}). See also \cite{DF21} for the similar results on cylindrically symmetric data. If we restrict the case $\kappa=1/2$, the existence of blowup solutions was proved in \cite{HOT13} for the initial data (including nonradial data) satisfying \eqref{eq:1.14} and a suitable weight condition. It was also proved 
in the same paper that if $\kappa=1/2$, there exists blowup solutions at the threshold $Q(\phi_{1,0},\psi_{1,0})$ through the pseudo-conformal transformation.
Both are regarded as the analogous results to \cite{W82, W86} for the single $L^2$-critical NLS.  

Nevertheless, we prove the following global result for arbitrarily large data with modification of oscillations. 

\begin{theorem}\label{thm:1.3}
Assume $d=4$. The following statements hold.
\begin{enumerate}[\rm(i)]
\item  If $0<\kappa<1/2$, then there exists a universal constant $A_0>0$ such that the following is true. Let $u_0\in H^1(\R^d)$ satisfy $\|u_0\|_{L^2}^2<A_0$, and take any $v_0\in H^1(\R^d)$. We set 
\begin{align}
\label{eq:1.15}
(u_{0,c},v_{0,c})=(e^{\frac{i}{2}c\cdot x}u_0 , e^{\frac{i}{2\kappa}c\cdot x}v_0).
\end{align}
Then there exists $A_1>0$ such that if $c\in\R^4$ satisfies $|c|\ge A_1$, then the $H^1$-solution of \eqref{NLS} with $(u(0),v(0))=(u_{0,c},v_{0,c})$ exists globally in time. 

\item If $\kappa>1/2$, then there exists a universal constant $B_0>0$ such that the following is true. Let $v_0\in H^1(\R^d)$ satisfy $\|v_0\|_{L^2}^2<B_0$, and take any $u_0\in H^1(\R^d)$. 
Then there exists $B_1>0$ such that if $c\in\R^4$ satisfies $|c|\ge B_1$, then the $H^1$-solution of \eqref{NLS} with the initial data \eqref{eq:1.15} exists globally in time.  

\end{enumerate}
\end{theorem}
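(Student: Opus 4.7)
The plan is to exploit conservation of the action $S_{\omega,c}=E+\omega Q+c\cdot P$ (inherited from conservation of $E$, $Q$, $P$ individually) for a \emph{judicious} choice $\omega=\omega_c$, combined with the $d=4$ critical Sobolev/Gagliardo--Nirenberg structure and a bootstrap. I describe case (i); case (ii) is symmetric with the roles of $u$ and $v$ interchanged.

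First, I would choose $\omega_c\ce|c|^2/(8\kappa)$. By the identity \eqref{eq:1.7}, this annihilates the coefficient of $\|v\|_{L^2}^2$ and leaves a strictly positive coefficient $\alpha_c\ce|c|^2(1-2\kappa)/(16\kappa)$ on $\|u\|_{L^2}^2/2$ (the positivity uses $\kappa<1/2$). Introducing $\til u\ce e^{-\frac{i}{2}c\cdot x}u$, $\til v\ce e^{-\frac{i}{2\kappa}c\cdot x}v$, and $\Phi(t)\ce\tfrac12\|\nabla\til u(t)\|_{L^2}^2+\tfrac{\kappa}{2}\|\nabla\til v(t)\|_{L^2}^2$, the conservation identity reads
\[
\Phi(t)+\alpha_c\|u(t)\|_{L^2}^2-\Re\int_{\R^4}u(t)^2\,\ol{v(t)}\,dx=K_c,
\]
where $K_c=\Phi(0)+\alpha_c\|u_0\|_{L^2}^2-I(c)$ and $I(c)\ce\Re\int e^{i(1-\frac{1}{2\kappa})c\cdot x}u_0^2\ol{v_0}\,dx$. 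Since $u_0^2\ol{v_0}\in L^1(\R^4)$, the Riemann--Lebesgue lemma yields $I(c)\to 0$ as $|c|\to\infty$, so $K_c=\alpha_c\|u_0\|_{L^2}^2+O(1)$.

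Next, since the oscillatory phase has unit modulus, $|\Re\int u^2\ol v|\le\int|\til u|^2|\til v|$. Applying H\"older with exponents $(8/3,8/3,4)$, the interpolation $\|\til u\|_{L^{8/3}}^2\le\|\til u\|_{L^2}\|\til u\|_{L^4}$, and the Sobolev embedding $\dot H^1(\R^4)\hookrightarrow L^4(\R^4)$, I would derive the Gagliardo--Nirenberg estimate
\[
\Bigl|\Re\int_{\R^4} u^2\,\ol v\,dx\Bigr|\le C_*\|u\|_{L^2}\,\|\nabla\til u\|_{L^2}\,\|\nabla\til v\|_{L^2},
\]
with $C_*$ a universal constant (using $\|\til u\|_{L^2}=\|u\|_{L^2}$). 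Combining with AM--GM $\|\nabla\til u\|\|\nabla\til v\|\le\Phi/\sqrt\kappa$ and Young's inequality $C_*\|u\|\cdot\Phi/\sqrt\kappa\le(\alpha_c/2)\|u\|^2+C_*^2\Phi^2/(2\alpha_c\kappa)$ (calibrated against the ``free'' coercive term $\alpha_c\|u\|^2$), conservation yields the scalar quadratic inequality
\[
\Phi(t)\le K_c+\frac{\delta}{|c|^2}\,\Phi(t)^2,\qquad \delta\ce\frac{8C_*^2}{1-2\kappa},
\]
valid on the maximal existence interval $[0,T_{\max})$.

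Finally, set $A_0\ce\kappa/(4C_*^2)$. If $\|u_0\|_{L^2}^2<A_0$, then for $|c|$ large enough one has $2K_c<|c|^2/(4\delta)$, so the quadratic $x\mapsto x-\delta x^2/|c|^2$ attains $K_c$ at two real points $\Phi_-<|c|^2/(2\delta)<\Phi_+$ with $\Phi_-\le 2K_c$; the inequality then confines $\Phi(t)$ to $[0,\Phi_-]\cup[\Phi_+,\infty)$. Since $\Phi(0)=K_c-\alpha_c\|u_0\|^2+I(c)\le K_c\le\Phi_-$ for $|c|$ large (as $\Phi(0)=O(1)$ while $K_c$ is of order $|c|^2$), continuity of $\Phi$ prevents a jump across the gap, so $\Phi(t)\le 2K_c$ throughout $[0,T_{\max})$. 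Using $\|\nabla u\|_{L^2}^2\le 2\|\nabla\til u\|_{L^2}^2+(|c|^2/2)\|u\|_{L^2}^2$ and charge conservation, this gives a finite ($c$-dependent) $H^1$-bound on $(u(t),v(t))$, hence $T_{\max}=\infty$. Case (ii) is handled with $\omega_c=|c|^2/4$ and the dual Gagliardo--Nirenberg bound $|\Re\int u^2\ol v|\le C_*\|\nabla\til u\|_{L^2}^2\|v\|_{L^2}$, producing $B_0\ce 1/(16C_*^2)$. The main obstacle is the algebraic step of choosing $\omega_c$ so that \emph{exactly one} of the two mass coefficients in \eqref{eq:1.7} vanishes while the other is a large positive multiple of $|c|^2$; this positivity is what provides the slack for Young's inequality to absorb the nonlinear term at large $|c|$, and it is precisely what fails in the mass-resonance case $\kappa=1/2$---so the argument leverages the absence of Galilean invariance in an essential way.
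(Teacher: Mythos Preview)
Your argument is correct and the key opening move---choosing $\omega_c=|c|^2/(8\kappa)$ (resp.\ $|c|^2/4$) so that exactly one of the two $L^2$ coefficients in \eqref{eq:1.7} vanishes, and then using Riemann--Lebesgue on $I(c)$---coincides with the paper's. After that, however, the two proofs diverge. The paper packages the remaining analysis as potential well theory: it introduces the Nehari functional $K_{\om,c}$ and the invariant set $\scA_{\om,c}^+=\{S_{\om,c}\le\mu(\om,c),\ K_{\om,c}\ge0\}$, proves invariance of $\scA_{\om,c}^+$ under the flow (Lemma~\ref{lem:3.1}) and a priori boundedness on it (Proposition~\ref{prop:3.2}), then uses the scaling $\mu(\om,c)=|c|^{2}\mu(\om/|c|^2,c/|c|)$ in $d=4$ to show the modulated data lands in $\scA_{|c|^2/(8\kappa),c}^+$ for large $|c|$. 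You instead bypass the variational machinery entirely: a single Gagliardo--Nirenberg estimate plus Young's inequality produces the closed scalar relation $\Phi\le K_c+(\delta/|c|^2)\Phi^2$, and a continuity/bootstrap argument traps $\Phi(t)$ below $\Phi_-$. Your route is more elementary and self-contained (no Nehari manifold, no minimizers, no Lemma~\ref{lem:2.5}), and it is nice that it handles case~(ii) in $d=4$ without touching the case~(C) subtleties. What you lose is the interpretation of the threshold: the paper's $A_0=\tfrac{16\kappa}{1-2\kappa}\,\mu\bigl(\tfrac{1}{8\kappa},\tfrac{c}{|c|}\bigr)$ is tied to the action of the traveling-wave ground states constructed in Theorem~\ref{thm:1.1} (this is exactly the point of Remark~\ref{rem:1.4}), whereas your $A_0=\kappa/(4C_*^2)$ is determined by a non-sharp Sobolev constant and carries no such meaning.
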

\begin{remark}
\label{rem:1.4}
One can prove a global existence result by perturbation of the semitrivial solution $(u(t),v(t))=(0,e^{it\kappa \Delta}v_0)$ for $v_0\in H^1(\R^4)$ (see Appendix \ref{sec:B}). This is also regarded as a global existence result with large data, but the smallness condition on $u(0)$ depends on $\norm[v_0]_{L^2}$. We note that the constants $A_0$ and $B_0$ in Theorem \ref{thm:1.3} are universal and they are determined by variational quantities which correspond to the action value of traveling waves. We also note that the assertion {\rm(ii)} gives the first global existence result without the size restriction of $u(0)$.
\end{remark}
\begin{remark}
\label{rem:1.5}
One can take the initial data \eqref{eq:1.15} as cylindrically symmetric data with respect to $c\in\R^4$. It follows from blowup/growup results \cite{DF21} on cylindrically symmetric data that we need to take at least  large $|c|$ enough that $E(u_{0,c},v_{0,c})\ge 0$ for this data to yield the global solution.
\end{remark}

Theorem \ref{thm:1.3} is obtained from a nontrivial application of classical potential well theory \cite{PS75}, and in particular potential wells with respect to the cases {\rm(B)} and {\rm(C)} play an important role. A similar global result  in the context of the generalized derivative NLS was first proved by the authors \cite{FHI17}. For the single NLS, Cazenave and Weissler \cite{CW92} established global existence for the quadratic oscillating data $e^{ib|x|^2}\psi$ for $\psi\in H^1(\R^d)$ with $|\cdot|\psi(\cdot) \in L^2(\R^d)$ and suitable large $b>0$. 
An important difference of this result is that the quadratic oscillating factor comes from pseudo-conformal transformation, but on the other hand the oscillating factor in Theorem \ref{thm:1.3} corresponds to the phase in the Galilean transformation. We also note that the quadratic oscillating data only yields the global solution forward in time, and in general the solution may blow up in negative time (see \cite[Remark 6.5.9]{C03}).   

When $\kappa=1/2$, under the transformation of initial data
\begin{align}
\label{eq:1.16}
(u_0,v_0)\mapsto (e^{\frac{i}{2}c\cdot x}u_0 , e^{\frac{i}{2\kappa}c\cdot x}v_0),
\end{align}
global properties of the solution do not change due to the Galilean invariance. However, Theorem \ref{thm:1.3} tells us that this is not the case for $\kappa\neq1/2$. Indeed, the solution for the initial data satisfying \eqref{eq:1.14} blows up or grows up in infinite time, but on the other hand the solution for the transformed initial data by \eqref{eq:1.16} with large $|c|>0$ is global and bounded.
This shows that when $\kappa\neq1/2$, the momentum change of the initial data by \eqref{eq:1.16} essentially influences global properties of the solution, of course which comes from the lack of Galilean invariance.

Traveling wave solutions are typical examples of non-scattering solutions, and therefore the relevance to scattering theory is important. When $\kappa=1/2$, the scattering below the standing waves (ground state) threshold is proved in \cite{IKN19} ($d=4$), \cite{H18} ($d=5$), and \cite{GMXZ21} ($d=6$) (see also \cite{HM21} for $d=3$). If we impose the radial assumption, the same result still holds for the case $\kappa\neq1/2$ (see \cite{GMXZ21, HIN21, IKN19}).
The traveling wave solutions of \eqref{NLS} are nonradial and therefore these solutions are removed under the radial assumption in previous works. Theorems \ref{thm:1.1} and \ref{thm:1.3} strongly suggest that if we consider \eqref{NLS} in the nonradial setting, there is an essential difference in global dynamics between the cases of $\kappa\neq1/2$ and $\kappa=1/2$. Our results give an important step towards understanding global dynamics for \eqref{NLS} with $\kappa\neq1/2$ in the nonradial setting.

\subsection{Organization of the paper}

The rest of this paper is organized as follows.
In Section \ref{sec:2}, we study variational problems for the system of stationary equations \eqref{eq:1.5} and show the existence of traveling wave solutions (Theorem \ref{thm:1.1}).
In Section \ref{sec:3}, we organize potential well theory generated by a two-parameter family of traveling waves. As an application of this theory, we establish the global existence result for oscillating data  (Theorem \ref{thm:1.3}). In the end of Section \ref{sec:3}, we briefly give a variational characterization of the charge condition \eqref{eq:1.13}.

\section{Existence of traveling wave solutions}
\label{sec:2}

In this section, we prove the existence of traveling wave solutions by solving variational problems on the Nehari manifold. A similar argument is done in the context of a two-parameter family of solitons for the derivative NLS (see \cite{CO06, FHI17, H21}), but we use some specific properties of the system here.

We prepare some notation. We set
\begin{align*}
  L_{\om,c}(u,v)
  \ce{}&\frac12\|\nabla u\|_{L^2}^2
   +\frac{\om}{2}\|u\|_{L^2}^2
   +\frac c2 \cdot(i\nabla u,u)_{L^2}
\\&+\frac{\kappa}{2}\|\nabla v\|_{L^2}^2
   +\om\|v\|_{L^2}^2
   +\frac c2 \cdot(i\nabla v,v)_{L^2},
\\N(u,v)
  \ce{}& \Re\int_{\R^d} u^2\ol{v}\,dx.
\end{align*}
The action functional \eqref{eq:1.6} is represented as
\begin{align*}
S_{\om,c}(u,v)=L_{\om,c}(u,v)-N(u,v).
\end{align*}
We introduce the Nehari functional
\begin{align*}
K_{\om,c}(u,v)
  &\ce \pt_\lam S_{\om,c}(\lam u ,\lam v)|_{\lam=1}
  =2L_{\om,c}(u,v)-3N_c(u,v).
\end{align*}
We consider the minimization problem
\begin{align*}
  \mu(\om,c)
  &\ce \inf\{S_{\om,c}(\phi,\psi):
  (\phi,\psi)\in\ml{K}_{\om,c}\},
\end{align*}
where the set $\ml{K}_{\om,c}$ is defined by
\begin{align*}
  \ml{K}_{\om,c}
  &= \{(\phi,\psi)\in X_{\om,c}:
  (\phi,\psi)\ne(0,0),\ K_{\om,c}(\phi,\psi)=0\}.
\end{align*}
We recall that the function space $X_{\om,c}$ is defined by \eqref{eq:1.8}.
We define the set of minimizers $\ml{M}_{\om,c}$ by
\begin{align*}
 \ml{M}_{\om,c}
  &= \{(\phi,\psi)\in \ml{K}_{\om,c}:
  S_{\om,c}(\phi,\psi)=\mu(\om,c)\}.
\end{align*}
The main result in this section is the following, which covers Theorem \ref{thm:1.1}.
\begin{proposition}
\label{prop:2.1}
Assume \eqref{eq:1.10} and assume further $d=5$ for the case {\rm(C)}. Then $\ml{G}_{\om,c}=\ml{M}_{\om,c}\ne\emptyset$.
\end{proposition}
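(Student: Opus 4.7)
The plan is to realize $\ml{G}_{\om,c}$ as the minimizer set $\ml{M}_{\om,c}$ of the Nehari-constrained problem $\mu(\om,c)$ and to prove $\ml{M}_{\om,c}\ne\emptyset$ via concentration compactness. I would proceed in four steps.

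First, I reduce to a cleaner setup by the gauge transformation $(u,v)\mapsto(e^{-\frac{i}{2}c\cdot x}u,e^{-\frac{i}{2\kappa}c\cdot x}v)$, after which $S_{\om,c}$ takes the form \eqref{eq:1.7} and the quadratic part $L_{\om,c}$ is an honest norm squared on $\til{X}_{\om,c}$ in each of the regimes (A), (B), (C), hence coercive on $X_{\om,c}$. I then verify $\ml{K}_{\om,c}\ne\emptyset$: for any nontrivial pair $(u,v)$, a sign change of $v$ makes $N(u,v)>0$, and the dilation $(\lam u,\lam v)$ with $\lam=2L_{\om,c}(u,v)/(3N(u,v))$ lies on $\ml{K}_{\om,c}$. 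Positivity $\mu(\om,c)>0$ follows from the identity $S_{\om,c}=\tfrac{1}{3}L_{\om,c}$ on $\ml{K}_{\om,c}$ (equivalent to $2L_{\om,c}=3N$) together with a Sobolev/Gagliardo--Nirenberg bound $|N(u,v)|\le C\,L_{\om,c}(u,v)^{\theta}$ with $\theta>1$. The dimensional restrictions in \eqref{eq:1.10} are precisely those needed to secure the Sobolev embeddings giving this bound.

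Second, I extract a minimizer by concentration compactness. Any minimizing sequence $(u_n,v_n)\subset\ml{K}_{\om,c}$ is bounded in $X_{\om,c}$. If the Lions concentration function of $(u_n,v_n)$ over unit balls vanishes, then interpolating against the uniform $X_{\om,c}$-bound produces $N(u_n,v_n)\to 0$, contradicting $N(u_n,v_n)=\tfrac{2}{3}L_{\om,c}(u_n,v_n)\ge c_0>0$. Hence a translate $(u_n(\cdot-y_n),v_n(\cdot-y_n))$ converges weakly to some $(\phi,\psi)\ne(0,0)$. Weak lower semicontinuity of $L_{\om,c}$, combined with a Brezis--Lieb-type decomposition of the trilinear $N$ using the available Rellich compactness, yields $K_{\om,c}(\phi,\psi)\le 0$; rescaling by $\lam=2L_{\om,c}(\phi,\psi)/(3N(\phi,\psi))\in(0,1]$ gives $(\lam\phi,\lam\psi)\in\ml{K}_{\om,c}$ with action $\tfrac{\lam^2}{3}L_{\om,c}(\phi,\psi)\le\mu(\om,c)$. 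Saturating all inequalities forces $\lam=1$, so $(\phi,\psi)\in\ml{M}_{\om,c}$ and the convergence is strong in $\til{X}_{\om,c}$.

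Third, I promote Nehari minimizers to solutions of \eqref{eq:1.5}. By the Lagrange multiplier theorem, $S_{\om,c}'(\phi,\psi)=\eta K_{\om,c}'(\phi,\psi)$ for some $\eta\in\R$. Pairing with $(\phi,\psi)$ yields $0=K_{\om,c}(\phi,\psi)=\eta\,\pt_\mu K_{\om,c}(\mu\phi,\mu\psi)|_{\mu=1}=\eta(4L_{\om,c}-9N)=-2\eta L_{\om,c}$, forcing $\eta=0$; hence $\ml{M}_{\om,c}\subset\ml{A}_{\om,c}$. Conversely, every element of $\ml{A}_{\om,c}$ lies in $\ml{K}_{\om,c}$ by testing the equation against itself, so $\mu(\om,c)\le S_{\om,c}(\zeta,\xi)$ for every $(\zeta,\xi)\in\ml{A}_{\om,c}$, giving $\ml{M}_{\om,c}=\ml{G}_{\om,c}$.

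The main obstacle is the concentration-compactness step in the zero-mass regimes (B) and (C), where one component lives only in $\dot{H}^1$. The loss of $L^2$-control leaves only Rellich compactness at subcritical Lebesgue exponents $p<2^*$, so both the Lions nonvanishing argument and the Brezis--Lieb analysis of $N$ must be reworked for a $\dot{H}^1$-component rather than an $H^1$-component. In case (C) with $d=4$, the critical exponent $2^*=4$ matches the cubic structure of $N$, producing a scale-critical defect that cannot be absorbed by the rescaling/subadditivity argument above; this is the source of the exclusion of $d=4$ in case (C) and is exactly the obstruction recorded in Lemma \ref{lem:2.9} and Remark \ref{rem:2.10}.
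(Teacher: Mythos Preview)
Your proposal is correct and follows essentially the same route as the paper: gauge-reduce so that the quadratic part becomes a genuine squared norm on $\til X_{\om,c}$, prove $\mu(\om,c)>0$ via the Sobolev/embedding bound $|N|\lesssim L_{\om,c}^{3/2}$, extract a nontrivial weak limit of a minimizing sequence by a Lieb/Lions nonvanishing argument applied to the $H^1$-component (this is exactly Lemma~\ref{lem:2.9}, and your diagnosis of why case~(C) with $d=4$ fails is the content of Remark~\ref{rem:2.10}), upgrade to strong convergence via Brezis--Lieb for $N$ and the rescaling trick (Lemmas~\ref{lem:2.6}, \ref{lem:2.7}, \ref{lem:2.11}), and finally identify $\ml M_{\om,c}=\ml G_{\om,c}$ by the Lagrange-multiplier computation $\langle K',(\phi,\psi)\rangle=-2L<0$ (Lemmas~\ref{lem:2.3}, \ref{lem:2.4}). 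The only cosmetic difference is that the paper works throughout in the tilde variables and therefore uses the phase-corrected translation $\til\tau_y(u,v)=(e^{-\frac{i}{2}c\cdot y}u(\cdot-y),e^{-\frac{i}{2\kappa}c\cdot y}v(\cdot-y))$ to keep $\til N_c$ invariant, whereas you translate in the original variables where $N$ and $L_{\om,c}$ are manifestly translation invariant; these are equivalent.
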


To prove Proposition \ref{prop:2.1} in a unified way for the cases (A)--(C), it is convenient to transform the functionals as follows. 
\begin{align*}
  \til{L}_{\om,c}(u,v)
  &\ce\frac12\|\nabla u\|_{L^2}^2
  +\frac12\Bigl(\om-\frac{|c|^2}{4}\Bigr)\|u\|_{L^2}^2
  +\frac{\kappa}{2}\|\nabla v\|_{L^2}^2
  +\Bigl(\om-\frac{|c|^2}{8\kappa}\Bigr)\|v\|_{L^2}^2,
\\\til{N}_c(u,v)
  &\ce\Re\int_{\R^d} e^{i(1-\frac{1}{2\kappa})c\cdot x}u^2\ol{v}\,dx.
\end{align*}
If we set
\begin{equation*}
  (\til{u},\til{v})
  \ce(e^{-\frac{i}{2}c\cdot x}u, 
  e^{-\frac{i}{2\kappa}c\cdot x}v), 
\end{equation*}
then we have the relations 
\begin{align}
\label{eq:2.1}
  \til{L}_{\om,c}(\til{u},\til{v})
  =L_{\om,c}(u,v),\quad 
  \til{N}_{c}(\til{u},\til{v})
  =N(u,v).
\end{align}
We define the functionals by 
\begin{align*}
  \til{S}_{\om,c}(u,v)
  &= \til{L}_{\om,c}(u,v)
  -\til{N}_c(u,v),
\\\til{K}_{\om,c}(u,v)
  &= \pt_\lam\til{S}_{\om,c}(\lam u ,\lam v)|_{\lam=1}
  =2\til{L}_{\om,c}(u,v)-3\til{N}_c(u,v).
\end{align*}
We note that $\til{S}_{\om,c}$ and $\til{K}_{\om,c}$
are defined on the function space $\til{X}_{\om,c}$.
The following Sobolev embeddings are useful in the proof of this section.
\begin{equation}
\label{eq:2.2}
  \til{X}_{\om,c}
  \hookrightarrow
  \left\{\begin{alignedat}{2}
  &L^3(\R^d)\times L^3(\R^d)&\quad 
  &\text{if case (A) holds},
\\&L^{\frac{4d}{d+2}}(\R^d)\times L^{2^*}(\R^d)&\quad 
  &\text{if case (B) holds},
\\&L^{2^*}(\R^d)\times L^{d/2}(\R^d)&\quad 
  &\text{if case (C) holds}.
  \end{alignedat}\right.
\end{equation}
We also prepare the following notation.
\begin{align*}
  \til{\ml{A}}_{\om,c}
  &\ce\{(\phi,\psi)\in \til{X}_{\om,c}:
  (\phi,\psi)\ne(0,0),\ \til{S}_{\om,c}'(\phi,\psi)=0\},
\\\til{\ml{G}}_{\om,c}
  &\ce\{(\phi,\psi)\in \til{\ml{A}}_{\om,c}:
  \til{S}_{\om,c}(\phi,\psi)\le \til{S}_{\om,c}(\zeta,\eta) 
  \text{ for all }(\zeta,\eta)\in\til{\ml{A}}_{\om,c}\},
\\\til{\ml{K}}_{\om,c}
  &\ce \{(\phi,\psi)\in \til{X}_{\om,c}:
  (\phi,\psi)\ne(0,0),\ \til{K}_{\om,c}(\phi,\psi)=0\},
\\\til{\mu}(\om,c)
  &\ce \inf\{\til{S}_{\om,c}(\phi,\psi):
  (\phi,\psi)\in\til{\ml{K}}_{\om,c}\},
\\\til{\ml{M}}_{\om,c}
  &\ce \{(\phi,\psi)\in \til{\ml{K}}_{\om,c}:
  \til{S}_{\om,c}(\phi,\psi)=\til{\mu}(\om,c)\}.
\end{align*}
We note that 
\begin{align*}
(\phi, \psi)\in \ml{G}_{\om,c} &\iff (\til{\phi}, \til{\psi})\in \til{\ml{G}}_{\om,c},\\
(\phi, \psi)\in \ml{M}_{\om,c} &\iff (\til{\phi}, \til{\psi})\in \til{\ml{M}}_{\om,c},
\end{align*}
and that 
\begin{align*}
\mu(\om,c)&=\til{\mu}(\om,c).
\end{align*}
Therefore, Proposition \ref{prop:2.1} is equivalent to the following.
\begin{proposition}
\label{prop:2.2}
Assume \eqref{eq:1.10} and assume further $d=5$ for the case {\rm(C)}. Then $\til{\ml{G}}_{\om,c}=\til{\ml{M}}_{\om,c}\ne\emptyset$.
\end{proposition}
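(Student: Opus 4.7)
The plan is to solve the Nehari-constrained minimization, exploiting the identity $\til{S}_{\om,c}=\tfrac{1}{3}\til{L}_{\om,c}$ on $\til{\ml{K}}_{\om,c}$, which follows from $\til{S}_{\om,c}=\til{L}_{\om,c}-\til{N}_c$ together with $\til{K}_{\om,c}=2\til{L}_{\om,c}-3\til{N}_c=0$. First I would prove $\til{\mu}(\om,c)>0$ by combining \eqref{eq:2.2} with H\"older's inequality to obtain an estimate
\begin{equation*}
|\til{N}_c(u,v)|\leq C\,\til{L}_{\om,c}(u,v)^{3/2},
\end{equation*}
the H\"older pairing being $L^3\times L^3$ in case (A), $L^{4d/(d+2)}\times L^{2^*}$ in case (B), and $L^{2^*}\times L^{d/2}$ in case (C). Substituting $2\til{L}_{\om,c}=3\til{N}_c$ then forces $\til{L}_{\om,c}(u,v)\geq c_0>0$ uniformly on $\til{\ml{K}}_{\om,c}$, so $\til{\mu}(\om,c)\geq c_0/3>0$. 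Because $\til{L}_{\om,c}$ is an equivalent squared norm on $\til{X}_{\om,c}$ under \eqref{eq:1.10}, any minimizing sequence $\{(\til{u}_n,\til{v}_n)\}\subset\til{\ml{K}}_{\om,c}$ is bounded in $\til{X}_{\om,c}$.

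The oscillating phase in $\til{N}_c$ spoils translation invariance of the tilded functionals, so I would transfer the minimizing sequence to $(u_n,v_n)\ce(e^{ic\cdot x/2}\til{u}_n,\,e^{ic\cdot x/(2\kappa)}\til{v}_n)\in\ml{K}_{\om,c}$, on which $S_{\om,c}$ and $K_{\om,c}$ are genuinely translation invariant. A Lions vanishing alternative applied to the relevant $L^p$-norms rules out vanishing, because $N(u_n,v_n)=\tfrac{2}{3}L_{\om,c}(u_n,v_n)\to 2\mu(\om,c)>0$. Recentering by translations $\{y_n\}\subset\R^d$ and passing to a subsequence yields a weak limit $(\phi,\psi)\in X_{\om,c}$ with $(\phi,\psi)\neq(0,0)$. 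Weak lower semicontinuity of $L_{\om,c}$ together with a Brezis--Lieb-type decomposition of the cubic $N$ produces $K_{\om,c}(\phi,\psi)\leq 0$ and $L_{\om,c}(\phi,\psi)\leq 3\mu(\om,c)$; if $K_{\om,c}(\phi,\psi)<0$, the unique $\lam\in(0,1)$ placing $(\lam\phi,\lam\psi)\in\ml{K}_{\om,c}$ would give $S_{\om,c}(\lam\phi,\lam\psi)=\tfrac{\lam^2}{3}L_{\om,c}(\phi,\psi)<\tfrac{1}{3}L_{\om,c}(\phi,\psi)\leq\mu(\om,c)$, a contradiction. Hence $K_{\om,c}(\phi,\psi)=0$ and $(\phi,\psi)\in\ml{M}_{\om,c}$.

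A standard Lagrange multiplier argument then gives $S_{\om,c}'(\phi,\psi)=\eta\,K_{\om,c}'(\phi,\psi)$ for some $\eta\in\R$. Pairing with $(\phi,\psi)$ and using the identity $\langle K_{\om,c}'(\phi,\psi),(\phi,\psi)\rangle=4L_{\om,c}(\phi,\psi)-9N(\phi,\psi)=-2L_{\om,c}(\phi,\psi)<0$ on $\ml{K}_{\om,c}$ forces $\eta=0$, so $(\phi,\psi)\in\ml{A}_{\om,c}$. Conversely, every element of $\ml{A}_{\om,c}$ satisfies $K_{\om,c}=0$ (differentiate $S_{\om,c}(\lam\phi,\lam\psi)$ at $\lam=1$), whence $\ml{G}_{\om,c}=\ml{M}_{\om,c}\neq\emptyset$. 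The principal obstacle is the compactness step in cases (B) and (C): absence of a mass term on the $\dot{H}^1$ component precludes the usual local Rellich compactness, and one must carefully handle the critical embedding $\dot{H}^1\hookrightarrow L^{2^*}$ in the concentration-compactness analysis. For case (C) with $d=4$, the scaling of $u^2\ol{v}$ with $u\in\dot{H}^1\hookrightarrow L^4$ matches the critical exponent of this embedding exactly, and the standard subcritical gap argument fails; this is the obstruction behind excluding $d=4$ in case (C).
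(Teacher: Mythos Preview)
Your outline matches the paper's strategy: the paper also reduces to $\til{S}_{\om,c}=\tfrac{1}{3}\til{L}_{\om,c}$ on the Nehari set, proves $\til{\mu}(\om,c)>0$ via the same H\"older pairings, extracts a nontrivial weak limit by concentration compactness, and closes with the Lagrange multiplier computation you give. Your passage to the untilded functionals to restore translation invariance is equivalent to the paper's modified translation $\til{\tau}_y(u,v)=(e^{-ic\cdot y/2}u(\cdot-y),\,e^{-ic\cdot y/(2\kappa)}v(\cdot-y))$, under which $\til{L}_{\om,c}$ and $\til{N}_c$ are invariant.

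One point deserves sharpening. You frame the compactness step in cases (B) and (C) as requiring careful handling of the critical embedding $\dot{H}^1\hookrightarrow L^{2^*}$, but the paper avoids that embedding entirely. The H\"older estimate on $\til{N}_c$ places a \emph{subcritical} exponent on the component lying in $H^1$, and Lieb's lemma (bounded in $H^1$, $\limsup\|f_n\|_{L^p}>0$ for some $p\in(2,2^*)$) is applied to that component alone: in case (B) one has $\limsup\|u_n\|_{L^{4d/(d+2)}}>0$ with $\tfrac{4d}{d+2}\in(2,2^*)$; in case (C) with $d=5$ one has $\limsup\|v_n\|_{L^{5/2}}>0$ with $\tfrac{5}{2}\in(2,\tfrac{10}{3})$. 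The $\dot{H}^1$ component plays no role in producing the nontrivial weak limit. This also pinpoints the $d=4$ obstruction in case (C) more precisely than your scaling remark: there the H\"older pairing yields only $\|v_n\|_{L^2}$ (endpoint) on the $H^1$ side and $\|u_n\|_{L^{2^*}}$ (critical, on the $\dot{H}^1$ side), so neither component feeds into the subcritical compactness lemma. Finally, your claim $K_{\om,c}(\phi,\psi)\le 0$ needs one step you left implicit: apply the same scaling contradiction to the remainder $(u_n-\phi,v_n-\psi)$, whose $\tfrac{1}{3}L_{\om,c}$-value converges strictly below $\mu(\om,c)$, to force $K_{\om,c}(u_n-\phi,v_n-\psi)\ge 0$; Brezis--Lieb for $K_{\om,c}$ then gives $K_{\om,c}(\phi,\psi)\le 0$.
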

The rest of this section is devoted to the proof of Proposition \ref{prop:2.2}.
\begin{lemma}\label{lem:2.3}
If \eqref{eq:1.10} holds, then $\til{\ml{M}}_{\om,c}\subset\til{\ml{G}}_{\om,c}$.
\end{lemma}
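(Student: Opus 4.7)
The plan is the standard Nehari-manifold/Lagrange-multiplier argument: every minimizer $(\phi,\psi) \in \til{\ml{M}}_{\om,c}$ is a constrained critical point of $\til{S}_{\om,c}$ on the Nehari manifold $\til{\ml{K}}_{\om,c}$, so by Lagrange's theorem there exists $\eta \in \R$ with
\begin{align*}
\til{S}_{\om,c}'(\phi,\psi) = \eta\,\til{K}_{\om,c}'(\phi,\psi).
\end{align*}
I would pair this identity with $(\phi,\psi)$ itself. Using $\til{K}_{\om,c}(\phi,\psi) = \langle \til{S}_{\om,c}'(\phi,\psi),(\phi,\psi)\rangle = 0$, the left-hand side vanishes, so we are left with $\eta\,\langle \til{K}_{\om,c}'(\phi,\psi),(\phi,\psi)\rangle = 0$. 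The next step is to compute the quantity in brackets. From $\til{K}_{\om,c}(\lam u, \lam v) = 2\lam^2 \til{L}_{\om,c}(u,v) - 3\lam^3 \til{N}_c(u,v)$ I would differentiate at $\lam=1$ to obtain
\begin{align*}
\langle \til{K}_{\om,c}'(\phi,\psi),(\phi,\psi)\rangle = 4\til{L}_{\om,c}(\phi,\psi) - 9\til{N}_c(\phi,\psi).
\end{align*}

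Since $(\phi,\psi)\in \til{\ml{K}}_{\om,c}$ means $2\til{L}_{\om,c}(\phi,\psi) = 3\til{N}_c(\phi,\psi)$, this reduces to $-3\til{N}_c(\phi,\psi) = -2\til{L}_{\om,c}(\phi,\psi)$. To conclude $\eta = 0$ I must verify that this quantity is nonzero. Here I would use the key observation that in each of the three cases (A), (B), (C), the quadratic form $\til{L}_{\om,c}$ is coercive on $\til{X}_{\om,c}$: in case (A) all four coefficients in $\til{L}_{\om,c}$ are strictly positive by the assumption $\om > \max\{|c|^2/4, |c|^2/(8\kappa)\}$; in case (B) the coefficient of $\|v\|_{L^2}^2$ drops out but the $\dot H^1$ norm of $v$ and all three remaining terms in $u$ survive (and $\om - |c|^2/4 = |c|^2(1-2\kappa)/(8\kappa) > 0$ since $\kappa < 1/2$); the analogous check works for case (C) with the roles reversed ($\om - |c|^2/(8\kappa) = |c|^2(2\kappa-1)/(8\kappa) > 0$). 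Hence $\til{L}_{\om,c}(\phi,\psi) = 0$ would force $(\phi,\psi) = (0,0)$, contradicting $(\phi,\psi) \in \til{\ml{K}}_{\om,c}$. This forces $\eta = 0$ and therefore $\til{S}_{\om,c}'(\phi,\psi) = 0$, i.e.\ $(\phi,\psi) \in \til{\ml{A}}_{\om,c}$.

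Finally, for the ground state property, I would observe that for any $(\zeta,\xi) \in \til{\ml{A}}_{\om,c}$ the Nehari identity $\til{K}_{\om,c}(\zeta,\xi) = \langle \til{S}_{\om,c}'(\zeta,\xi),(\zeta,\xi)\rangle = 0$ places $(\zeta,\xi)$ in $\til{\ml{K}}_{\om,c}$, so by definition of $\til{\mu}(\om,c)$ we have $\til{S}_{\om,c}(\phi,\psi) = \til{\mu}(\om,c) \le \til{S}_{\om,c}(\zeta,\xi)$. This is exactly the definition of $\til{\ml{G}}_{\om,c}$.

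The only place where something could go wrong is step involving the non-vanishing of $\til{L}_{\om,c}(\phi,\psi)$, and this is the main (though modest) obstacle: it requires a case-by-case verification that relies on the precise strict inequalities built into \eqref{eq:1.10} and on the function space definition \eqref{eq:1.9} being matched to which mass coefficient is allowed to vanish. Everything else is a routine application of constrained critical point theory, with no compactness or regularity subtleties at this stage.
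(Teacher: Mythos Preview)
Your proof is correct and follows essentially the same Nehari/Lagrange-multiplier argument as the paper: the paper likewise computes $\langle\til{K}_{\om,c}'(\phi,\psi),(\phi,\psi)\rangle=-2\til{L}_{\om,c}(\phi,\psi)<0$, deduces the multiplier vanishes, and then uses $\til{\ml{A}}_{\om,c}\subset\til{\ml{K}}_{\om,c}$ to conclude minimality. The only difference is cosmetic---the paper asserts $\til{L}_{\om,c}(\phi,\psi)>0$ without comment, whereas you spell out the case-by-case coercivity check.
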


\begin{proof}
Let $(\phi,\psi)\in\til{\ml{M}}_{\om,c}$. Since $\til{K}_{\om,c}(\phi,\psi)=0$ and $(\phi,\psi)\ne 0$, we have
\begin{equation} \label{eq:2.3}
  \langle\til{K}_{\om,c}'(\phi,\psi),(\phi,\psi)\rangle
  =4\til{L}_{\om,c}(\phi,\psi)
  -9\til{N}_{c}(\phi,\psi)
  =-2\til{L}_{\om,c}(\phi,\psi) 
  <0. 
\end{equation}
By the Lagrange multiplier theorem there exists $\lam\in\R$ such that $\til{S}_{\om,c}'(\phi,\psi)=\lam\til{K}_{\om,c}'(\phi,\psi)$. Moreover, we have 
\[\lam\langle\til{K}_{\om,c}'(\phi,\psi),(\phi,\psi)\rangle
  =\langle\til{S}_{\om,c}'(\phi,\psi),(\phi,\psi)\rangle
  =\til{K}_{\om,c}(\phi,\psi)
  =0.
  \]
Therefore, the inequality \eqref{eq:2.3} implies $\lam=0$. 
Hence, $\til{S}_{\om,c}'(\phi,\psi)=0$, which yields $(\phi,\psi)\in\til{\ml{A}}_{\om,c}$.

From the relation $\til{\ml{A}}_{\om,c}\subset\til{\ml{K}}_{\om,c}$ and $ (\phi,\psi)\in\til{\ml{M}}_{\om,c}$, we have 
\begin{align*}
\til{S}_{\om,c}(\phi,\psi)\le \til{S}_{\om,c}(\zeta,\eta)\quad\text{for all}
~(\zeta,\eta)\in\til{\ml{A}}_{\om,c},
\end{align*}
which implies $(\phi,\psi)\in\til{\ml{G}}_{\om,c}$. This completes the proof.
\end{proof}

\begin{lemma} \label{lem:2.4}
Assume \eqref{eq:1.10}. If $\til{\ml{M}}_{\om,c}\ne\emptyset$, then $\til{\ml{G}}_{\om,c}\subset\til{\ml{M}}_{\om,c}$.
\end{lemma}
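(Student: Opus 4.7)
The plan is to chase through the definitions using Lemma \ref{lem:2.3} as a bridge, once we observe that every critical point automatically lies on the Nehari manifold.

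The first step is to record the inclusion $\til{\ml{A}}_{\om,c}\subset \til{\ml{K}}_{\om,c}$. This is immediate: if $(\phi,\psi)\in\til{\ml{A}}_{\om,c}$, then $\til{S}_{\om,c}'(\phi,\psi)=0$, so
\begin{equation*}
\til{K}_{\om,c}(\phi,\psi)
=\langle \til{S}_{\om,c}'(\phi,\psi),(\phi,\psi)\rangle
=0,
\end{equation*}
and since $(\phi,\psi)\ne(0,0)$ we get $(\phi,\psi)\in\til{\ml{K}}_{\om,c}$. Consequently, $\til{S}_{\om,c}(\phi,\psi)\ge\til{\mu}(\om,c)$ for every $(\phi,\psi)\in\til{\ml{A}}_{\om,c}$.

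Next, I would fix any $(\phi_*,\psi_*)\in\til{\ml{M}}_{\om,c}$, which exists by hypothesis. Lemma \ref{lem:2.3} gives $(\phi_*,\psi_*)\in\til{\ml{G}}_{\om,c}$, and in particular $\til{S}_{\om,c}(\phi_*,\psi_*)=\til{\mu}(\om,c)$. Now take an arbitrary $(\phi,\psi)\in\til{\ml{G}}_{\om,c}$. By the definition of ground state applied to the test element $(\phi_*,\psi_*)\in\til{\ml{A}}_{\om,c}$,
\begin{equation*}
\til{S}_{\om,c}(\phi,\psi)\le \til{S}_{\om,c}(\phi_*,\psi_*)=\til{\mu}(\om,c).
\end{equation*}
On the other hand, by the inclusion established in the first step, $(\phi,\psi)\in\til{\ml{K}}_{\om,c}$, which forces $\til{S}_{\om,c}(\phi,\psi)\ge\til{\mu}(\om,c)$. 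Hence equality holds, so $(\phi,\psi)\in\til{\ml{M}}_{\om,c}$, which is the desired inclusion $\til{\ml{G}}_{\om,c}\subset\til{\ml{M}}_{\om,c}$.

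There is no real obstacle here: the statement is essentially a tautology once one notes that the Nehari functional $\til{K}_{\om,c}$ is precisely the pairing $\langle \til{S}_{\om,c}',(\cdot,\cdot)\rangle$, so critical points automatically satisfy the Nehari constraint. The nontrivial input (that a minimizer on $\til{\ml{K}}_{\om,c}$ is actually a critical point of $\til{S}_{\om,c}$) has already been absorbed into Lemma \ref{lem:2.3} via the Lagrange multiplier argument, and the present lemma just closes the loop in the opposite direction.
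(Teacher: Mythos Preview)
Your proof is correct and follows essentially the same route as the paper's: both use Lemma~\ref{lem:2.3} to identify the action of a Nehari minimizer with the ground-state action, and both rely on the inclusion $\til{\ml{A}}_{\om,c}\subset\til{\ml{K}}_{\om,c}$ (which the paper already recorded in the proof of Lemma~\ref{lem:2.3}) to close the argument. The only difference is cosmetic---you compare $\til{S}_{\om,c}(\phi,\psi)$ directly to $\til\mu(\om,c)$, while the paper compares it to $\til{S}_{\om,c}(\zeta,\eta)$ for a chosen $(\zeta,\eta)\in\til{\ml{M}}_{\om,c}$---but these are the same number.
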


\begin{proof}
Let $(\phi,\psi)\in\til{\ml{G}}_{\om,c}$. Since we assume $\til{\ml{M}}_{\om,c}\ne\emptyset$, one can take $(\zeta,\eta)\in\til{\ml{M}}_{\om,c}$. By Lemma~\ref{lem:2.3}, we have $(\zeta,\eta)\in\til{\ml{G}}_{\om,c}$, i.e., $\til{S}_{\om,c}(\zeta,\eta)=\til{S}_{\om,c}(\phi,\psi)$. Therefore, for each $(u,v)\in\til{\ml{K}}_{\om,c}$ we obtain
\[\til{S}_{\om,c}(\phi,\psi)
  =\til{S}_{\om,c}(\zeta,\eta)
  \le \til{S}_{\om,c}(u,v). \]
Since $(\phi,\psi)\in\til{\ml{G}}_{\om,c}\subset\til{\ml{K}}_{\om,c} $, we deduce that $(\phi,\psi)\in\til{\ml{M}}_{\om,c}$.
\end{proof}

To complete the proof of Proposition \ref{prop:2.2}, it suffices to show $\til{\ml{M}}_{\om,c}\ne\emptyset$. 
\begin{lemma} \label{lem:2.5}
If \eqref{eq:1.10} holds, then $\til{\mu}(\om,c)>0$.
\end{lemma}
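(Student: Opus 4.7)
My plan is to exploit the Nehari constraint to reduce the lemma to a uniform lower bound on $\tilde{L}_{\om,c}$, and then use the Sobolev embeddings listed in \eqref{eq:2.2} together with the signs of the coefficients in $\tilde{L}_{\om,c}$ to estimate $\tilde{N}_c$ superquadratically.

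First, I observe that for any $(\phi,\psi)\in \til{\ml{K}}_{\om,c}$ the constraint $\til{K}_{\om,c}(\phi,\psi)=0$ reads $2\til{L}_{\om,c}(\phi,\psi)=3\til{N}_c(\phi,\psi)$, and therefore
\begin{equation*}
\til{S}_{\om,c}(\phi,\psi)
=\til{L}_{\om,c}(\phi,\psi)-\til{N}_c(\phi,\psi)
=\tfrac{1}{3}\til{L}_{\om,c}(\phi,\psi).
\end{equation*}
Hence it is enough to prove that $\inf\{\til{L}_{\om,c}(\phi,\psi):(\phi,\psi)\in\til{\ml{K}}_{\om,c}\}>0$, i.e.\ to rule out that a nontrivial element of the Nehari manifold can have arbitrarily small $\til{L}_{\om,c}$.

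Next, in each of the cases (A), (B), (C), I check that under the hypotheses \eqref{eq:1.10} all surviving coefficients in $\til{L}_{\om,c}$ are strictly positive: in (A) both $L^2$ coefficients are positive by definition, in (B) the assumption $0<\kappa<1/2$ gives $\om-|c|^2/4>0$, and in (C) the assumption $\kappa>1/2$ gives $\om-|c|^2/(8\kappa)>0$. Thus $\til{L}_{\om,c}$ is a positive definite quadratic form equivalent, respectively, to the square of the $H^1\times H^1$, $H^1\times\dot{H}^1$, and $\dot{H}^1\times H^1$ norm appearing in \eqref{eq:1.9}. Using H\"older together with the embeddings \eqref{eq:2.2}, I then bound
\begin{equation*}
|\til{N}_c(u,v)|\le \|u\|_{L^{p_1}}^2\|v\|_{L^{p_2}},
\end{equation*}
with exponents $(p_1,p_2)=(3,3)$ in case (A), $(p_1,p_2)=(4d/(d+2),2^{*})$ in case (B), and $(p_1,p_2)=(2^{*},d/2)$ in case (C). In case (C) with $d=5$ I control $\|v\|_{L^{5/2}}$ by interpolating between $L^2$ and $L^{2^{*}}$, which is exactly why the dimensional hypothesis $d=5$ enters. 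In every admissible case this yields
\begin{equation*}
|\til{N}_c(u,v)|\le C\,\til{L}_{\om,c}(u,v)^{3/2}
\end{equation*}
for some constant $C=C(\om,c,\kappa,d)>0$.

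Combining this estimate with the Nehari identity $2\til{L}_{\om,c}(\phi,\psi)=3\til{N}_c(\phi,\psi)$ on $\til{\ml{K}}_{\om,c}$ gives $2\til{L}_{\om,c}(\phi,\psi)\le 3C\,\til{L}_{\om,c}(\phi,\psi)^{3/2}$, and since $\til{L}_{\om,c}(\phi,\psi)>0$ (otherwise $(\phi,\psi)=(0,0)$ by the above positivity), I may divide to obtain $\til{L}_{\om,c}(\phi,\psi)\ge (2/(3C))^2$, uniformly on $\til{\ml{K}}_{\om,c}$. This produces the lower bound $\til{\mu}(\om,c)\ge \tfrac{1}{3}(2/(3C))^2>0$. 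The only nontrivial step is the verification of the estimate for $|\til{N}_c|$ in cases (B) and (C): choosing the H\"older pair to match the available Sobolev embedding is precisely what forces the dimensional restrictions in \eqref{eq:1.10} and the additional requirement $d=5$ in case (C).
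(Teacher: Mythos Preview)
Your argument is correct and is essentially the paper's proof: on $\til{\ml{K}}_{\om,c}$ one has $\til{S}_{\om,c}=\tfrac13\til{L}_{\om,c}$, and the embeddings \eqref{eq:2.2} together with H\"older give $\til{L}_{\om,c}\lesssim\til{L}_{\om,c}^{3/2}$, whence the positive lower bound after dividing. One small correction, though: this lemma does \emph{not} require the extra hypothesis $d=5$ in case (C). For $d=4$ the exponent $d/2$ equals $2$, so $\|v\|_{L^{d/2}}=\|v\|_{L^2}\le\|v\|_{H^1}$ is immediate, and your own H\"older choice $(p_1,p_2)=(2^*,d/2)$ already covers both $d=4$ and $d=5$ without any interpolation being ``forced''. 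The restriction $d=5$ in case (C) enters only later, in the concentration-compactness step (Lemma~\ref{lem:2.9} and Remark~\ref{rem:2.10}), not in the present lemma.
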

\begin{proof}
By the expression $\til{S}_{\om,c}=3^{-1}\til{L}_{\om,c}+3^{-1}\til{K}_{\om,c}$, one can rewrite $\til{\mu}(\om,c)$ as 
\begin{equation} 
\label{eq:2.4}
  \til{\mu}(\om,c)
  =\inf\{3^{-1}\til{L}_{\om,c}(\phi,\psi)\colon
  (\phi,\psi)\in\til{\ml{K}}_{\om,c}\}.
\end{equation}
The conclusion follows from the inequality 
\begin{align}
\label{eq:2.5}
\til{L}_{\om,c}(u,v)
  \lesssim \til{L}_{\om,c}(u,v)^{3/2}\quad 
  \text{for any }(u,v)\in\til{\ml{K}}_{\om,c}.
\end{align}
Indeed, by dividing the both sides of \eqref{eq:2.5} by $\til{L}_{\om,c}(u,v)>0$, we obtain the positive lower bound for $\til{L}_{\om,c}$. Then combined with \eqref{eq:2.4}, this yields the conclusion. 

The inequality \eqref{eq:2.5} is proved as follows.
\\
{\bf Case (A).} By the Sobolev embedding $H^1(\R^d) \hookrightarrow L^3(\R^d)$, we obtain 
\[2\til{L}_{\om,c}(u,v)
  =3\til{N}_{c}(u,v)
  \lesssim\|u\|_{L^3}^2\|v\|_{L^3}
  \lesssim\|u\|_{H^1}^2\|v\|_{H^1}
  \lesssim \til{L}_{\om,c}(u,v)^{3/2}. \]
{\bf Case (B).} From $(2^*)'=\frac{2d}{d+2}$ and the embedding \eqref{eq:2.2}, we have
\begin{align*}
  2\til{L}(u,v)
  &=3\til{N}_c(u,v)
  \lesssim \|u\|_{L^{\frac{4d}{d+2}}}^2\|v\|_{L^{2^*}}
  \lesssim\|u\|_{H^1}^2\|\nabla v\|_{L^2}
  \lesssim\til{L}(u,v)^{3/2}.
\end{align*}
{\bf Case (C).} From $(2^*/2)'=d/2$ and the embedding \eqref{eq:2.2}, we have
\begin{align*}
  2\til{L}(u,v)
  &=3\til{N}_c(u,v)
  \lesssim \|u\|_{L^{2^*}}^2\|v\|_{L^{d/2}}
  \lesssim\|\nabla u\|_{L^2}^2\|v\|_{H^1}
  \lesssim\til{L}(u,v)^{3/2}.
\end{align*}
This completes the proof.
\end{proof}

\begin{lemma}\label{lem:2.6}
Assume \eqref{eq:1.10}. If $(u,v)\in \til{X}_{\om, c}$ satisfies $\til{K}_{\om,c}(u,v)<0$, then $3^{-1}\til L_{\om, c}(u,v)>\til{\mu}(\om,c)$.
\end{lemma}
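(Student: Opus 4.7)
The plan is to reduce $(u,v)$ to a point on the Nehari manifold $\til{\ml{K}}_{\om,c}$ via the one-parameter scaling $\lam \mapsto (\lam u,\lam v)$, and then compare the action values. First I would compute how each functional behaves under this scaling:
\begin{align*}
  \til{L}_{\om,c}(\lam u,\lam v) &= \lam^2 \til{L}_{\om,c}(u,v),\\
  \til{N}_c(\lam u,\lam v) &= \lam^3 \til{N}_c(u,v),\\
  \til{K}_{\om,c}(\lam u,\lam v) &= 2\lam^2 \til{L}_{\om,c}(u,v) - 3\lam^3 \til{N}_c(u,v).
\end{align*}

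Next I would extract from $\til{K}_{\om,c}(u,v)<0$ the positivity of both $\til{L}_{\om,c}(u,v)$ and $\til{N}_c(u,v)$. The strict inequality $\til{K}_{\om,c}(u,v)<0$ forces $(u,v)\neq(0,0)$, so by the coercivity of $\til{L}_{\om,c}$ on $\til{X}_{\om,c}$ (which holds because under \eqref{eq:1.10} at least one of the $L^2$-coefficients is strictly positive and the gradient terms are positive definite) one gets $\til{L}_{\om,c}(u,v)>0$; then $2\til{L}_{\om,c}(u,v)<3\til{N}_c(u,v)$ forces $\til{N}_c(u,v)>0$ as well. Now define the Nehari scale
\[
\lam_0 \ce \frac{2\til{L}_{\om,c}(u,v)}{3\til{N}_c(u,v)}.
\]
A direct substitution shows $\til{K}_{\om,c}(\lam_0 u,\lam_0 v)=0$, so $(\lam_0 u,\lam_0 v)\in\til{\ml{K}}_{\om,c}$. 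The hypothesis $\til{K}_{\om,c}(u,v)<0$ is equivalent to $2\til L_{\om,c}(u,v)<3\til{N}_c(u,v)$, i.e.\ $\lam_0<1$.

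Finally I would exploit the identity $\til{S}_{\om,c}=\tfrac13\til{L}_{\om,c}+\tfrac13\til{K}_{\om,c}$ used already in \eqref{eq:2.4}: on $\til{\ml{K}}_{\om,c}$ the action equals $\tfrac13\til{L}_{\om,c}$, hence
\[
\til{\mu}(\om,c)\le \til{S}_{\om,c}(\lam_0 u,\lam_0 v)
=\tfrac13\til{L}_{\om,c}(\lam_0 u,\lam_0 v)
=\tfrac{\lam_0^2}{3}\til{L}_{\om,c}(u,v)
<\tfrac13\til{L}_{\om,c}(u,v),
\]
where the last strict inequality uses $0<\lam_0<1$ together with $\til{L}_{\om,c}(u,v)>0$. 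This gives the claim.

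The only nontrivial step is checking that $\til{L}_{\om,c}(u,v)>0$ whenever $\til{K}_{\om,c}(u,v)<0$, since in cases (B) and (C) one of the $L^2$-coefficients in $\til{L}_{\om,c}$ vanishes. However, in case (B) the remaining coefficient $\tfrac12(\om-|c|^2/4)=\tfrac12(|c|^2/(8\kappa)-|c|^2/4)$ is positive thanks to $0<\kappa<1/2$, and symmetrically in case (C) the coefficient $\om-|c|^2/(8\kappa)$ is positive thanks to $\kappa>1/2$; together with the gradient terms, this makes $\til{L}_{\om,c}$ an equivalent norm on $\til{X}_{\om,c}$, so its vanishing would force $(u,v)=(0,0)$, contradicting $\til{K}_{\om,c}(u,v)<0$. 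No deeper obstacle arises.
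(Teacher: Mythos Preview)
Your proof is correct and follows essentially the same route as the paper: define the scaling parameter $\lam_0 = 2\til{L}_{\om,c}(u,v)/(3\til{N}_c(u,v))\in(0,1)$, observe that $(\lam_0 u,\lam_0 v)\in\til{\ml{K}}_{\om,c}$, and then compare via the characterization \eqref{eq:2.4}. You supply more justification than the paper does (the explicit scaling computations and the case-by-case verification that $\til{L}_{\om,c}(u,v)>0$), but the argument is the same.
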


\begin{proof}
If $\til{K}_{\om,c}(u,v)<0$, since $3\til{N}_{c}(u,v)>2\til{L}_{\om,c}(u,v)>0$, we see that
\[\lam_0
  \ce\frac{2\til{L}_{\om,c}(u,v)}{3\til{N}_{c}(u,v)} 
  \in(0,1) \]
and $\til{K}_{\om,c}(\lam_0u,\lam_0v)=0$. From \eqref{eq:2.4}, we obtain
\[\til{\mu}(\om,c)
  \le \frac{1}{3}\til{L}_{c}(\lam_0u,\lam_0v)
  =\frac{\lam_0^2}{3}\til{L}_{c}(u,v)
  <\frac{1}{3}\til{L}_{c}(u,v). \qedhere
  \]
\end{proof}

\begin{lemma} \label{lem:2.7}
Assume \eqref{eq:1.10}. If the sequence $\{(u_n,v_n)\}_{n\in\N}$ weakly converges to $(u,v)$ in $\til{X}_{\om, c}$, then 
  \[\til N_c(u_n,v_n)
  -\til N_c(u_n-u,v_n-v)
  \to \til N_c(u,v)\quad\text{as}~n\to\infty.
  \]
\end{lemma}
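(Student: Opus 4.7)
The statement is a Brezis--Lieb type decomposition for the trilinear form $\til{N}_c$. The plan is to write $u_n = u + w_n$ and $v_n = v + z_n$, where $w_n\wto 0$ and $z_n\wto 0$ in the appropriate component of $\til{X}_{\om,c}$, and expand algebraically: noting that $u_n^2\ol{v_n} - w_n^2\ol{z_n}$ equals
\[
  u^2\ol{v} + u^2\ol{z_n} + 2uw_n\ol{v} + 2uw_n\ol{z_n} + w_n^2\ol{v},
\]
it suffices to show that the four remainder terms, each integrated against $e^{i(1-\frac{1}{2\kappa})c\cdot x}$, tend to $0$ as $n\to\infty$.

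The first two remainder terms, $\int e^{i\alpha c\cdot x}u^2\ol{z_n}\,dx$ and $\int e^{i\alpha c\cdot x}uw_n\ol{v}\,dx$ (with $\alpha = 1-\frac{1}{2\kappa}$), are \emph{linear} in $z_n$ or $w_n$, so they vanish simply by weak convergence, provided the fixed factors $e^{i\alpha c\cdot x}u^2$ and $e^{i\alpha c\cdot x}u\ol{v}$ lie in the appropriate dual spaces. This is exactly what the Sobolev embeddings \eqref{eq:2.2} deliver in each of the three cases: in case~(A), $u,v\in H^1\hookrightarrow L^3$, so both factors are in $L^{3/2}$; in case~(B), $u\in H^1\hookrightarrow L^{4d/(d+2)}$ and $v\in\dot{H}^1\hookrightarrow L^{2^*}$, giving $u^2\in L^{2d/(d+2)}=L^{(2^*)'}$ and $u\ol{v}\in L^{(4d/(d+2))'}$ by Hölder; case~(C) is analogous with the roles of $u$ and $v$ swapped.

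The two \emph{nonlinear} remainder terms $\int e^{i\alpha c\cdot x}uw_n\ol{z_n}\,dx$ and $\int e^{i\alpha c\cdot x}w_n^2\ol{v}\,dx$ do not vanish by weak convergence alone, and this is the main point of the argument. Here I would use a truncation-plus-Rellich argument: fix $R>0$ and split each integral over $B_R$ and $B_R^c$. On $B_R$, the Rellich--Kondrachov theorem gives $w_n\to 0$ and $z_n\to 0$ strongly in every subcritical local $L^p$, so, by Hölder applied with the integrability exponents coming from \eqref{eq:2.2}, the interior piece tends to $0$ for each fixed $R$. The exterior piece is controlled by $\|u\|_{L^p(B_R^c)}$ or $\|v\|_{L^p(B_R^c)}$ times the uniform $\til{X}_{\om,c}$-bound of $(w_n,z_n)$ (which is automatic from weak convergence); this tail tends to $0$ as $R\to\infty$ by dominated convergence since $u,v$ belong to the relevant $L^p$. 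Sending first $n\to\infty$ and then $R\to\infty$ gives the desired limit.

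The main obstacle is to choose the Hölder exponents consistently in the three cases (A)--(C), especially in case~(C) with $d=4$, where one factor lives only in $\dot{H}^1\hookrightarrow L^{2^*}$ and the corresponding Sobolev exponent becomes critical. Under the standing assumption that $d=5$ in case~(C), however, all the exponents produced by \eqref{eq:2.2} are strictly subcritical for the relevant component, so Rellich applies and the truncation argument closes without difficulty.
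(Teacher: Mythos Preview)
Your argument is correct and close in spirit to the paper's. The paper organizes the algebra slightly differently: it expands
\[
\til N_c(u_n,v_n)-\til N_c(u_n-u,v_n-v)-\til N_c(u,v)
=\Re\int_{\R^d} e^{i(1-\frac{1}{2\kappa})c\cdot x}\bigl(2u_nu\ol{v_n}-u^2\ol{v_n}+u_n^2\ol{v}-2u_nu\ol{v}\bigr)\,dx
\]
and then disposes of all four terms simultaneously by invoking the weak convergences $u_n\wto u$, $v_n\wto v$, $u_n^2\wto u^2$ and $u_n\ol{v_n}\wto u\ol{v}$ in the Lebesgue spaces given by \eqref{eq:2.2}. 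The weak convergence of the \emph{products} is of course proved by exactly the local-compactness-plus-boundedness mechanism you spell out, so your explicit truncation-plus-Rellich treatment of the ``nonlinear'' remainders is the same argument unpacked. What the paper's packaging buys is brevity; what yours buys is that the role of compactness is visible rather than hidden inside a claim about products.

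One correction to your last paragraph: Lemma~\ref{lem:2.7} is stated under \eqref{eq:1.10} alone, with no additional restriction to $d=5$ in case~(C); the extra assumption $d=5$ enters only later, in Lemma~\ref{lem:2.9}. Your truncation argument in fact goes through for $d=4$ in case~(C) as well: for $\int uw_n\ol{z_n}$ use $\|z_n\|_{L^2(B_R)}\to 0$ (Rellich on the $H^1$ component) together with $u,w_n\in L^{2^*}=L^4$, and for $\int w_n^2\ol{v}$ pair one copy of $w_n$ at a subcritical local exponent with the other at $L^4$ and $v$ at an exponent slightly above $2$. So the caveat is unnecessary.
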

\begin{proof}
A direct calculation shows that
\begin{equation*}
\begin{aligned}
   &\til N_c(u_n,v_n)
    -\til N_c(u_n-u,v_n-v)
    -\til N_c(u,v)
\\ &\quad 
    =\Re\int_{\R^d} e^{i(1-\frac{1}{2\kappa})c\cdot x}(2u_nu\ol{v_n}-u^2\ol{v_n}
    +u_n^2\ol{v}-2u_nu\ol{v})\,dx.
\end{aligned}
\end{equation*}
The right-hand side vanishes as $n\to\infty$ from the embedding \eqref{eq:2.2} and the following weak convergences.
\begin{itemize}
\item {\bf Case (A):} $(u_n,v_n)\wto (u,v)$ in $L^3(\R^d)\times L^3(\R^d)$, $u_n^2\wto u^2$ in $L^{3/2}(\R^d)$, and $u_n\ol{v}_n\wto u\ol{v}$ in $L^{3/2}(\R^d)$. 

\item {\bf Case (B):} $(u_n,v_n)\wto (u,v)$ in $L^{\frac{4d}{d+2}}(\R^d)\times L^{2^*}(\R^d)$, $u_n^2\wto u^2$ in $L^{\frac{2d}{d+2}}(\R^d)$ and $u_n\ol{v}_n\wto u\ol{v}$ in $L^{\frac{4d}{3d-2}}(\R^d)$. 

\item {\bf Case (C):} $(u_n,v_n)\wto (u,v)$ in $L^{2^*}(\R^d)\times L^{d/2}(\R^d)$, $u_n^2\wto u^2$ in $L^{\frac{d}{d-2}}(\R^d)$ and $u_n\ol{v}_n\wto u\ol{v}$ in $L^{\frac{2d}{d+2}}(\R^d)$. 

\end{itemize}
This completes the proof.
\end{proof}

We use the following lemma on concentration compactness.
\begin{lemma}[\cite{L83b}] \label{lem:2.8}
Let $\{f_n\}_{n\in\N}$ be a bounded sequence in $H^1(\R^d)$. If $\limsup_{n \to \infty} \|f_n\|_{L^{q}}>0$ for some $q\in(2,2^*)$ when $d\ge 3$ and for some $q\in (2, \infty)$ when $d=1, 2$, then there exist $\{y_n\}_{n\in\N}\subset\R^d$ and 
$f\in H^1(\R^d)\setminus \{0\}$ such that $\{f_n(\cdot-y_n)\}_{n\in\N}$ has a subsequence that converges to $f$ weakly in $H^1(\R^d)$. 
\end{lemma}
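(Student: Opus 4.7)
The plan is to prove Lemma \ref{lem:2.8} by the standard lattice-covering argument due to Lions. First, I would cover $\R^d$ by disjoint unit cubes $\{Q_k\}_{k\in\mathbb{Z}^d}$ centered at the lattice points, and define $m_n\ce\sup_{k\in\mathbb{Z}^d}\|f_n\|_{L^q(Q_k)}$. Since $q>2$, one has the elementary inequality
\begin{align*}
\|f_n\|_{L^q(\R^d)}^q=\sum_{k\in\mathbb{Z}^d}\|f_n\|_{L^q(Q_k)}^q\le m_n^{q-2}\sum_{k\in\mathbb{Z}^d}\|f_n\|_{L^q(Q_k)}^2.
\end{align*}
The Sobolev embedding on the unit cube, valid precisely under the stated range of $q$ (this is where $q<2^{\ast}$ enters when $d\ge 3$), then yields $\|f_n\|_{L^q(Q_k)}^2\lesssim\|f_n\|_{H^1(Q_k)}^2$ with a constant independent of $k$. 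Summing gives
\begin{align*}
\sum_{k\in\mathbb{Z}^d}\|f_n\|_{L^q(Q_k)}^2\lesssim\sum_{k\in\mathbb{Z}^d}\|f_n\|_{H^1(Q_k)}^2=\|f_n\|_{H^1(\R^d)}^2,
\end{align*}
which is bounded uniformly in $n$ by hypothesis. Combining these with $\limsup_{n\to\infty}\|f_n\|_{L^q(\R^d)}>0$, one obtains a subsequence along which $m_n$ is bounded below by a positive constant.

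Next, along this subsequence I would pick lattice points $k_n\in\mathbb{Z}^d$ with $\|f_n\|_{L^q(Q_{k_n})}\ge m_n/2$, let $y_n$ be the center of $Q_{k_n}$, and set $g_n\ce f_n(\cdot-y_n)$. Then $\{g_n\}$ is bounded in $H^1(\R^d)$, so a further subsequence converges weakly to some $f\in H^1(\R^d)$. By the compact Rellich--Kondrachov embedding $H^1(Q_0)\hookrightarrow L^q(Q_0)$ on the unit cube $Q_0$ at the origin, this convergence strengthens to strong convergence in $L^q(Q_0)$. The uniform lower bound $\|g_n\|_{L^q(Q_0)}\gtrsim 1$ therefore passes to $\|f\|_{L^q(Q_0)}>0$, forcing $f\ne 0$. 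Defining $y_n$ arbitrarily on the complementary indices completes the construction.

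The only genuinely nontrivial step is the lattice-covering inequality in the first display: it isolates the ``maximal-cube'' $L^q$-mass $m_n$ as the essential obstruction to vanishing, by separating $L^q$-integrability (controlled by the supremum over cubes) from $L^2$-type local integrability (controlled by the $H^1$-norm via Sobolev on a fixed cube). The remainder of the argument is routine weak $H^1$-compactness combined with local strong convergence in $L^q$, so no serious obstacle is expected there.
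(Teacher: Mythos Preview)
Your argument is correct and is the standard Lions lattice-covering proof of Lieb's compactness lemma. Note, however, that the paper does not prove this statement at all: it is simply quoted from \cite{L83b} as a known tool, so there is no ``paper's own proof'' to compare against. Your write-up would serve perfectly well as a self-contained justification should one be desired.
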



We note that $\til{L}_{\om,c}$ and $\til{N}_{c}$ is invariant under 
\[\til{\tau}_{y}(u,v)
  \ce (e^{-\frac{i}{2}c\cdot y}u(\cdot-y),e^{-\frac{i}{2\kappa}c\cdot y}v(\cdot-y)), \]
that is, we have 
\begin{align*}
  \til{L}_{\om,c}(\til{\tau}_{y}(u,v))
  =\til{L}_{\om,c}(u,v),\quad  
  \til{N}_{c}(\til{\tau}_{y}(u,v))
  =\til{N}_{c}(u,v) 
\end{align*}
for all $y\in\R^d$.

\begin{lemma} \label{lem:2.9}
Assume \eqref{eq:1.10}. Assume further that $d=5$ for case {\rm(C)}. If a sequence $\{(u_n,v_n)\}_{n\in\N}$ in $\til{X}_{\om,c}$ satisfies 
\begin{align*}
  \til{L}_{\om, c}(u_n,v_n)\to l_1,\quad
  \til{N}_c(u_n,v_n)\to l_2
  \quad\text{as}~n\to\infty,
\end{align*} 
for some positive constants $l_1,l_2>0$, then there exist $\{y_n\}_{n\in\N}$ and $(u, v)\in \til{X}_{\om,c}\setminus\{(0,0)\}$ such that $\{\til{\tau}_{y_n}(u_n,v_n)\}_{n\in\N}$ has a subsequence that weakly converges to $(u,v)$ in $\til{X}_{\om,c}$. 
\end{lemma}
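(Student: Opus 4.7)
The strategy is a concentration-compactness argument based on Lemma~\ref{lem:2.8}, exploiting the same H\"older--Sobolev estimates on $\til N_c$ that appeared in the proof of Lemma~\ref{lem:2.5}. First I would extract boundedness from the hypothesis: in each of the three cases the coefficients of $\|u\|_{L^2}^2$ and $\|v\|_{L^2}^2$ in $\til L_{\om,c}$ are nonnegative, at most one of them vanishes, and the $\|\nabla u\|_{L^2}^2$ and $\|\nabla v\|_{L^2}^2$ coefficients are strictly positive, so $\til L_{\om,c}(u_n,v_n)\to l_1$ forces $\{(u_n,v_n)\}$ to be bounded in $\til{X}_{\om,c}$.

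Next I would use the assumption $\til N_c(u_n,v_n)\to l_2>0$, combined with the same H\"older-type estimates as in Lemma~\ref{lem:2.5}, to locate a component with non-vanishing $L^q$-norm for some admissible exponent. In case~(A) one has $|\til N_c(u_n,v_n)|\lesssim \|u_n\|_{L^3}^2\|v_n\|_{L^3}$, so after relabeling $u$ and $v$ I may assume $\limsup_n\|u_n\|_{L^3}>0$. In case~(B), since $\|v_n\|_{L^{2^*}}\lesssim\|\nabla v_n\|_{L^2}$ is bounded, the Sobolev estimate forces $\limsup_n\|u_n\|_{L^{4d/(d+2)}}>0$, and the exponent $4d/(d+2)$ lies in $(2,2^*)$ for every $d\ge 3$. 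In case~(C) the symmetric argument using $\|u_n\|_{L^{2^*}}\lesssim\|\nabla u_n\|_{L^2}$ gives $\limsup_n\|v_n\|_{L^{d/2}}>0$, and $d/2$ lies in $(2,2^*)$ precisely when $d=5$. In each situation I would apply Lemma~\ref{lem:2.8} to the identified $H^1$-component to obtain a shift sequence $\{y_n\}\subset\R^d$ and, along a subsequence, a nonzero weak $H^1$-limit; the remaining component is bounded in $\dot H^1$ or $H^1$, so a further Banach--Alaoglu extraction provides its weak limit.

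Finally, the modified translation $\til\tau_{y_n}$ also involves the constant phase factors $e^{-\frac{i}{2}c\cdot y_n}$ and $e^{-\frac{i}{2\kappa}c\cdot y_n}$, which are bounded scalars; extracting yet a further subsequence so that these converge to some $\alpha,\beta\in\C$ with $|\alpha|=|\beta|=1$, and multiplying the already-obtained spatial weak limits by $\alpha$ and $\beta$, I obtain the desired nonzero weak limit of $\{\til\tau_{y_n}(u_n,v_n)\}$ in $\til X_{\om,c}$. The main obstruction in this scheme is case~(C) with $d=4$: then $d/2=2$ is the endpoint of the admissible range $(2,2^*)$ in Lemma~\ref{lem:2.8}, and non-vanishing of $\|v_n\|_{L^2}$ alone does not locate a concentrating translate. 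This is precisely the critical phenomenon noted in Remark~\ref{rem:2.10}, and it is why the statement restricts case~(C) to $d=5$.
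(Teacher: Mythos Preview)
Your proposal is correct and follows essentially the same approach as the paper: boundedness from $\til L_{\om,c}(u_n,v_n)\to l_1$, then the H\"older--Sobolev estimates of Lemma~\ref{lem:2.5} applied to $\til N_c(u_n,v_n)\to l_2>0$ to force non-vanishing of one component in an admissible $L^q$, followed by Lemma~\ref{lem:2.8}. In fact your treatment is slightly more explicit than the paper's, which simply writes ``the conclusion follows from Lemma~\ref{lem:2.8}'' without spelling out the Banach--Alaoglu extraction for the remaining component or the subsequence argument for the unimodular phase factors in $\til\tau_{y_n}$; these details are exactly as you describe them.
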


\begin{proof}
From $\lim_{n\to\infty}\til{L}_{\om,c}(u_n,v_n)=l_1$, we deduce that the sequence $\{(u_n,v_n)\}_{n\in\N}$ is bounded in $\til{X}_{\om,c}$. Moreover, since $\lim_{n\to\infty}\til{N}_c(u_n,v_n)=l_2>0$, we obtain that
\begin{alignat*}{2}
  &\limsup_{n\to\infty}\|u_n\|_{L^3}>0&\quad
  &\text{if case (A) holds},
\\&\limsup_{n\to\infty}\|u_n\|_{L^{\frac{4d}{d+2}}}>0&\quad
  &\text{if case (B) holds},
\\&\limsup_{n\to\infty}\|v_n\|_{L^{d/2}}>0&\quad
  &\text{if case (C) holds}.
\end{alignat*}
Therefore, the conclusion follows from Lemma~\ref{lem:2.8}.
\end{proof}

\begin{remark}
\label{rem:2.10}
For the case (C) and $d=4$, we obtain that $\limsup_{n\to\infty}\|u_n\|_{L^{2^*}}>0$ and $\limsup_{n\to\infty}\|v_n\|_{L^2}>0$, 
however which do not satisfy the assumption of Lemma~\ref{lem:2.8}.
\end{remark}

\begin{lemma}
\label{lem:2.11}
Assume \eqref{eq:1.10}. Assume further that $d=5$ for case {\rm(C)}. If a sequence $\{(u_n,v_n)\}_{n\in\N}$ in $\til{X}_{\om,c}$ satisfies 
\begin{align*}
    \til{K}_{\om,c}(u_n,v_n)
    \to 0,\quad
    \til{S}_{\om,c}(u_n,v_n)
    \to \til\mu(\om,c)
    \quad\text{as}~n\to\infty,
\end{align*}
then there exist $\{y_n\}_{n\in\N}$ and $(u,v)\in \til{X}_{\om,c}\setminus\{(0,0)\}$ such that $\{\til{\tau}_{y_n}(u_n,v_n)\}_{n\in\N}$ has a subsequence that converges to $(u,v)$ in $\til{X}_{\om,c}$. In particular, $(u,v)\in\til{\ml{M}}_{\om,c}$. 
\end{lemma}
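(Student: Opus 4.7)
The strategy is to upgrade the weak limit produced by Lemma~\ref{lem:2.9} into a strong one via a Brezis--Lieb-type splitting together with a contradiction argument powered by Lemma~\ref{lem:2.6}. First, using the identity $\til{S}_{\om,c}=\tfrac13\til{L}_{\om,c}+\tfrac13\til{K}_{\om,c}$, the hypotheses yield
\[
  \til{L}_{\om,c}(u_n,v_n)\longrightarrow 3\til\mu(\om,c),\qquad \til{N}_c(u_n,v_n)\longrightarrow 2\til\mu(\om,c),
\]
both positive by Lemma~\ref{lem:2.5}. Then Lemma~\ref{lem:2.9} supplies $\{y_n\}\subset\R^d$ and $(u,v)\in\til X_{\om,c}\setminus\{(0,0)\}$ such that, along a subsequence, $\til\tau_{y_n}(u_n,v_n)\wto(u,v)$ in $\til X_{\om,c}$. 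Since $\til L_{\om,c}$ and $\til N_c$ are invariant under $\til\tau_y$, I may replace $(u_n,v_n)$ by $\til\tau_{y_n}(u_n,v_n)$ and assume $(u_n,v_n)\wto(u,v)$ directly without loss.

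The next step is the splitting. The parameter restrictions in \eqref{eq:1.10} ensure in each of the cases (A)--(C) that $\til L_{\om,c}$ induces an equivalent norm squared on the Hilbert space $\til X_{\om,c}$: in case (B) this uses $\om-|c|^2/4>0$ (since $\kappa<1/2$ and $c\neq 0$), and in case (C) the analogous $\om-|c|^2/(8\kappa)>0$ (since $\kappa>1/2$). Weak convergence then yields the Hilbert-space identity
\[
  \til L_{\om,c}(u_n,v_n)=\til L_{\om,c}(u_n-u,v_n-v)+\til L_{\om,c}(u,v)+o(1),
\]
and combining with Lemma~\ref{lem:2.7} gives $\til K_{\om,c}(u_n-u,v_n-v)\to -\til K_{\om,c}(u,v)$. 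The heart of the argument is to show $\til K_{\om,c}(u,v)=0$. If $\til K_{\om,c}(u,v)<0$, Lemma~\ref{lem:2.6} gives $\tfrac13\til L_{\om,c}(u,v)>\til\mu(\om,c)$, contradicting the weak lower semicontinuity bound $\til L_{\om,c}(u,v)\le 3\til\mu(\om,c)$. If $\til K_{\om,c}(u,v)>0$, then $\til K_{\om,c}(u_n-u,v_n-v)<0$ for all large $n$, and Lemma~\ref{lem:2.6} applied to $(u_n-u,v_n-v)$ combined with the splitting forces $3\til\mu(\om,c)-\til L_{\om,c}(u,v)\ge 3\til\mu(\om,c)$, hence $\til L_{\om,c}(u,v)\le 0$, contradicting $(u,v)\neq(0,0)$. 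Thus $\til K_{\om,c}(u,v)=0$, so $(u,v)\in\til{\ml K}_{\om,c}$ and $\til S_{\om,c}(u,v)=\tfrac13\til L_{\om,c}(u,v)\ge\til\mu(\om,c)$; together with weak l.s.c.\ this forces $\til L_{\om,c}(u,v)=3\til\mu(\om,c)$, and hence $(u,v)\in\til{\ml M}_{\om,c}$.

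Strong convergence then comes for free: since $\til L_{\om,c}(u_n,v_n)\to \til L_{\om,c}(u,v)$ and $(u_n,v_n)\wto(u,v)$ in the Hilbert space $(\til X_{\om,c},\til L_{\om,c})$, convergence of the squared norm upgrades weak to norm convergence. I expect the main difficulty to be ruling out the dichotomy scenario $\til K_{\om,c}(u,v)>0$: this is precisely where both the quantitative subadditivity provided by Lemma~\ref{lem:2.6} and the positivity of $\til L_{\om,c}$ on $\til X_{\om,c}\setminus\{(0,0)\}$ enter in an essential way. The exclusion of case (C) with $d=4$ is not caused by this argument but by the failure of Lemma~\ref{lem:2.9} in that regime, as noted in Remark~\ref{rem:2.10}.
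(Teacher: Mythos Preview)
Your argument is correct and follows essentially the same route as the paper: both proofs invoke Lemma~\ref{lem:2.9} for a nontrivial weak limit, use the Hilbert-space splitting of $\til L_{\om,c}$ together with Lemma~\ref{lem:2.7} for $\til N_c$, apply Lemma~\ref{lem:2.6} to the weak limit and to the remainder, and conclude strong convergence from equality of $\til L_{\om,c}$-norms. The only difference is organizational: the paper first shows $\til K_{\om,c}(\vec w_n-\vec w)\ge 0$ for large $n$ and deduces $\til K_{\om,c}(u,v)\le 0$ in one stroke, whereas you run an explicit trichotomy on the sign of $\til K_{\om,c}(u,v)$; the content is identical.
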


\begin{proof}
By the assumptions, we have 
\begin{align*}
  \frac13\til L_{\om,c}(u_n,v_n)
  &=\til S_{\om,c}(u_n,v_n)
  -\frac13\til K_{\om,c}(u_n,v_n)
  \to \til\mu(\om,c),
\\\frac12\til N_c(u_n,v_n)
  &=\til S_{\om,c}(u_n,v_n)
  -\frac12\til K_{\om,c}(u_n,v_n)
  \to \til\mu(\om,c).
\end{align*}
Since $\til\mu(\om,c)>0$ by Lemma~\ref{lem:2.5}, we can apply Lemma~\ref{lem:2.9}. Thus, there exist $\{y_n\}_{n\in\N}$, $(u,v)\in \til{X}_{\om,c}\setminus\{(0,0)\}$, and a subsequence of $\{\til{\tau}_{y_n}(u_n,v_n)\}_{n\in\N}$ (still denoted by the same symbol) such that $\til{\tau}_{y_n}(u_n,v_n)\wto (u,v)$ weakly in $\til{X}_{\om,c}$. For simplicity we set $\vec{w}_n\ce \til{\tau}_{y_n}(u_n,v_n)$ and $\vec{w}\ce(u,v)$. 

By the weakly convergence of $\vec{w}_n$ and Lemma~\ref{lem:2.7}, we have
\begin{align} \label{eq:2.6}
  \til L_{\om, c}(\vec{w}_n)
  -\til L_{\om, c}(\vec{w}_n-\vec{w})
  &\to \til L_{\om,c}(\vec{w}),
\\\label{eq:2.7}
  \til K_{\om,c}(\vec{w}_n)
  -\til K_{\om,c}(\vec{w}_n-\vec{w})
  &\to \til K_{\om,c}(\vec{w}). 
\end{align}
From \eqref{eq:2.6} and $\til{L}_{\om,c}(\vec{w})>0$, we obtain that, up to a subsequence, 
\[\frac13\lim_{n\to\infty}\til L_{\om,c}(\vec{w}_n-\vec{w})
  <\frac13\lim_{n\to\infty}\til L_{\om,c}(\vec{w}_n)
  =\til\mu(\om,c).
   \] 
From this and Lemma~\ref{lem:2.6}, we obtain $\til K_{\om,c}(\vec{w}_n-\vec{w})>0$ for large $n$. Therefore, since $\til{K}_{\om, c}(\vec{w}_n)\to 0$, it follows from \eqref{eq:2.7} that $\til K_{\om,c}(\vec{w})\le 0$. By Lemma~\ref{lem:2.6} again and the weakly lower semi-continuity of norms, we obtain
\[\til\mu(\om,c) 
  \le \frac13\til L_{\om,c}(\vec{w})
  \le\frac13\lim_{n\to\infty}\til L_{\om,c}(\vec{w}_n)
  =\til\mu(\om,c) . \]
Therefore, by \eqref{eq:2.6} we obtain $\til L_{\om,c}(\vec{w}_n-\vec{w})\to0$, which implies that $\vec{w}_n\to\vec{w}$ strongly in $\til{X}_{\om,c}$. 
This completes the proof.
\end{proof}
\begin{proof}[Proof of Proposition \ref{prop:2.2}]
The result follows from Lemmas \ref{lem:2.3}, \ref{lem:2.4}, and \ref{lem:2.11}.
\end{proof}

\section{Global existence from potential well theory}
\label{sec:3}

We introduce the subsets of the energy space
\begin{align*}
  \scA_{\om,c}^{+}
  &\ce \{ (u,v)\in H^1(\R^d)\times H^1(\R^d) :
  S_{\om,c}(u,v)\le \mu(\om,c),\ K_{\om,c}(u,v) \ge 0\},
 \\
  \scA_{\om,c}^{-}
  &\ce \{ (u,v)\in H^1(\R^d)\times H^1(\R^d) :
  S_{\om,c}(u,v)\le \mu(\om,c),\ K_{\om,c}(u,v) < 0\}. 
\end{align*}
We first show that $\scA_{\om,c}^{\pm}$ is the invariant set under the flow.
\begin{lemma}\label{lem:3.1}
Assume \eqref{eq:1.10}. Then the set $\scA^{\pm}_{\om,c}$ is invariant under the flow of \eqref{NLS}, that is, if $(u_0,v_0)\in\scA^\pm_{\om,c}$, then the $H^1$-solution $(u(t),v(t))$ of \eqref{NLS} with $(u(0),v(0))=(u_0,v_0)$ satisfies $(u(t),v(t))\in\scA^\pm_{\om,c}$ for all $t\in I_{\max}$, where $I_{\max}$ is the maximal existence interval of the solution.
\end{lemma}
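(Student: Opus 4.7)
The plan is to use conservation of the action and continuity of the Nehari functional to reduce the invariance of $\scA^{\pm}_{\om,c}$ to the preservation of the sign of $K_{\om,c}$, and then to rule out any sign change by a contradiction argument based on the variational characterization $\mu(\om,c)=\inf_{\ml{K}_{\om,c}} S_{\om,c}$.

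First I would note that, since $E$, $Q$, and $P$ are conserved along \eqref{NLS}, so is $S_{\om,c}=E+\om Q+c\cdot P$; hence $S_{\om,c}(u(t),v(t))\le\mu(\om,c)$ for every $t\in I_{\max}$, so only the sign condition on $K_{\om,c}$ remains to be checked. By local well-posedness in $H^1\times H^1$ and the Sobolev embedding $H^1(\R^d)\hookrightarrow L^3(\R^d)$, valid for $1\le d\le 5$, the map $t\mapsto K_{\om,c}(u(t),v(t))$ is continuous. Now suppose, for contradiction, that $(u_0,v_0)\in\scA^{+}_{\om,c}$ but $K_{\om,c}(u(t_1),v(t_1))<0$ for some $t_1\in I_{\max}$. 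By the intermediate value theorem there is a $t^{*}\in I_{\max}$ with $K_{\om,c}(u(t^{*}),v(t^{*}))=0$. Conservation of $Q$ precludes $(u(t^{*}),v(t^{*}))=(0,0)$, since this would force $(u_0,v_0)=(0,0)$, which is incompatible with $K_{\om,c}<0$ at $t_1$. Hence $(u(t^{*}),v(t^{*}))\in\ml{K}_{\om,c}$, and by the definition of $\mu(\om,c)$ together with conservation of $S_{\om,c}$ we obtain $\mu(\om,c)\le S_{\om,c}(u(t^{*}),v(t^{*}))=S_{\om,c}(u_0,v_0)\le\mu(\om,c)$; hence equality holds and $(u(t^{*}),v(t^{*}))\in\ml{M}_{\om,c}$. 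By Lemma~\ref{lem:2.3} this point lies in $\ml{G}_{\om,c}\subset\ml{A}_{\om,c}$, i.e., it is a traveling wave profile, so uniqueness of the $H^1$-flow (cf.\ \cite{HOT13}) forces $(u(t),v(t))$ to be the corresponding traveling wave on all of $I_{\max}$; gauge and translation invariance of $L_{\om,c}$ and $N$ then give $K_{\om,c}(u(t),v(t))\equiv 0$, contradicting $K_{\om,c}(u(t_1),v(t_1))<0$.

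The argument for $\scA^{-}_{\om,c}$ is entirely symmetric: a sign change of $K_{\om,c}$ from negative to nonnegative would again produce a ground state at the first crossing time, from which the uniqueness argument yields $K_{\om,c}\equiv 0$ along the flow, contradicting $K_{\om,c}(u_0,v_0)<0$. The delicate point — and the only genuine obstacle — is the borderline case $S_{\om,c}(u_0,v_0)=\mu(\om,c)$, where $K_{\om,c}$ is a priori allowed to vanish at a ground state; in the strict sub-threshold regime $S_{\om,c}(u_0,v_0)<\mu(\om,c)$ the contradiction would be immediate from $(u(t^{*}),v(t^{*}))\in\ml{K}_{\om,c}\Rightarrow S_{\om,c}\ge\mu$. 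It is precisely the uniqueness of the $H^1$-flow together with the gauge and translation invariance of $S_{\om,c}$ that keeps the threshold case under control.
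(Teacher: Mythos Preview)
Your argument is correct and essentially identical to the paper's own proof: both use conservation of $S_{\om,c}$, pick a time at which $K_{\om,c}$ vanishes, conclude that the solution at that time lies in $\ml{M}_{\om,c}\subset\ml{G}_{\om,c}$ (via Lemma~\ref{lem:2.3}), and then invoke uniqueness of the $H^1$-flow to obtain the contradiction $K_{\om,c}\equiv 0$. The only cosmetic differences are that the paper rules out $(u(t^*),v(t^*))=(0,0)$ directly by uniqueness of the Cauchy problem rather than via conservation of $Q$, and that you are more explicit about continuity of $t\mapsto K_{\om,c}(u(t),v(t))$ and about the threshold case $S_{\om,c}=\mu(\om,c)$.
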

\begin{proof}
We only show the assertion for $\scA^+_{\om,c}$.
Let $(u_0,v_0)\in\scA^{+}_{\om,c}$. It is obvious that $S_{\om,c}(u(t),v(t))\le \mu(\om,c)$ for all $t\in I_{\max}$ because $S_{\om,c}$ is a conserved quantity of \eqref{NLS}. 

Now we show that $K_{\om,c}(u(t),v(t))\ge0$ for all $t\in I_{\max}$. If not, there exists $t_1, t_2\in I_{\max}$ such that $K_{\om,c}(u(t_1),v(t_1))<0$ and $K_{\om,c}(u(t_2),v(t_2))=0$. By the uniqueness of Cauchy problem for \eqref{NLS}, we have $(u(t_2),v(t_2))\ne(0,0)$. Moreover, since $S_{\om,c}(u(t_2),v(t_2))\le \mu(\om,c)$, we obtain 
$(u(t_2),v(t_2))\in\ml{M}_{\om,c}\subset \ml{G}_{\om,c}$. 
This yields that
\begin{align*}
(u(t),v(t))=(e^{i\om (t-t_2)}u(t_2,x-c(t-t_2), e^{2i\om (t-t_2)}v(t_2,x-c(t-t_2))
\end{align*}
for all $t\in\R$.
In particular, $K_{\om,c}(u(t),v(t))=0$ for all $t\in\R$, which contradicts $K_{\om,c}(u(t_1),v(t_1))<0$. This completes the proof.
\end{proof}


The element of $\scA^+_{\om,c}$ yields the global and bounded solution as follows.
\begin{proposition}\label{prop:3.2}
Assume \eqref{eq:1.10}. If $(u_0,v_0)\in\scA_{\om,c}^+$, then the $H^1$-solution $(u(t),v(t))$ of \eqref{NLS} with $(u(0),v(0))=(u_0,v_0)$ exists globally in time and
\begin{align*}
\sup_{t\in\R} \norm[(u(t),v(t))]_{H^1\times H^1}\le C\l(\norm[(u_0,v_0)]_{H^1\times H^1} \r)<\infty.
\end{align*} 
\end{proposition}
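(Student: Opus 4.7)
The plan is the standard invariant-set / potential-well argument. By Lemma~\ref{lem:3.1}, the set $\scA_{\om,c}^{+}$ is invariant under the flow of \eqref{NLS}, so on the maximal interval of existence $I_{\max}$ we have both $K_{\om,c}(u(t),v(t))\ge 0$ and $S_{\om,c}(u(t),v(t))\le \mu(\om,c)$. The two identities $S_{\om,c}=L_{\om,c}-N$ and $K_{\om,c}=2L_{\om,c}-3N$ yield the algebraic relation $L_{\om,c}=3S_{\om,c}-K_{\om,c}$, so I obtain the uniform pointwise bound
\[L_{\om,c}(u(t),v(t))\le 3\mu(\om,c),\qquad t\in I_{\max}.\]

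To convert this into a bound on the $H^{1}\times H^{1}$ norm I use the gauge-transformed expression \eqref{eq:1.7}. Under \eqref{eq:1.10} the coefficients $\tfrac{1}{2}(\om-|c|^{2}/4)$ and $\om-|c|^{2}/(8\kappa)$ in front of $\|u\|_{L^{2}}^{2}$ and $\|v\|_{L^{2}}^{2}$ are both nonnegative (one of them may vanish in the cases (B), (C)), hence the previous inequality implies
\[\tfrac{1}{2}\|\nabla (e^{-\tfrac{i}{2}c\cdot x}u(t))\|_{L^{2}}^{2}+\tfrac{\kappa}{2}\|\nabla (e^{-\tfrac{i}{2\kappa}c\cdot x}v(t))\|_{L^{2}}^{2}\le 3\mu(\om,c).\]
Charge conservation supplies the complementary estimate $\|u(t)\|_{L^{2}}^{2}+\|v(t)\|_{L^{2}}^{2}\le 2Q(u_{0},v_{0})$. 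Finally, the pointwise relation $|\nabla u|\le \tfrac{|c|}{2}|u|+|\nabla (e^{-\tfrac{i}{2}c\cdot x}u)|$ (and its analogue for $v$) transfers the gauge-transformed gradient bound back to $\|\nabla u(t)\|_{L^{2}}$ and $\|\nabla v(t)\|_{L^{2}}$, yielding
\[\sup_{t\in I_{\max}}\|(u(t),v(t))\|_{H^{1}\times H^{1}}\le C\bigl(\|(u_{0},v_{0})\|_{H^{1}\times H^{1}}\bigr)<\infty,\]
with $C$ depending on the fixed parameters $\om,c,\kappa$ only through $\mu(\om,c)$. The blow-up alternative for the $H^{1}$-solution then forces $I_{\max}=\R$.

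The argument presents no serious obstacle; it is essentially a routine Nehari-type computation once Lemma~\ref{lem:3.1} is in place. The one point worth flagging is that in the cases (B), (C) of \eqref{eq:1.10} the functional $L_{\om,c}$ alone is no longer coercive on $H^{1}\times H^{1}$, because one of the two $L^{2}$ coefficients in \eqref{eq:1.7} degenerates. This is precisely the place where conservation of the charge $Q$ must be invoked in order to control the missing $L^{2}$ direction; the identity $L_{\om,c}=3S_{\om,c}-K_{\om,c}$ alone would only control the shifted gradient part. In case (A) this issue does not arise and a direct coercivity bound from $L_{\om,c}$ would suffice, but the unified argument above is cleaner.
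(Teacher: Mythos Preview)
Your proof is correct and follows essentially the same route as the paper: the identity $L_{\om,c}=3S_{\om,c}-K_{\om,c}$ together with Lemma~\ref{lem:3.1} gives $L_{\om,c}(u(t),v(t))\le 3\mu(\om,c)$, and then the gauge-shifted form \eqref{eq:1.7} combined with charge conservation yields the uniform $H^1\times H^1$ bound. Your explicit remark about why charge conservation is needed in cases (B) and (C)---because one $L^2$ coefficient in \eqref{eq:1.7} degenerates---is a useful clarification that the paper leaves implicit.
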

\begin{proof}
Since $S_{\om,c}=3^{-1}L_{\om,c}+3^{-1}K_{\om,c}$, 
it follows from Lemma~\ref{lem:3.1} that
\begin{align*}
  \mu(\om,c)\ge 
  S_{\om ,c}(u(t),v(t))&=\frac{1}{3}L_{\om,c}(u(t),v(t))+\frac{1}{3}K_{\om,c}(u(t),v(t))
\\&\ge \frac{1}{3}L_{\om,c}(u(t),v(t))
\\&\ge\frac{1}{6}\Bigl\|\nabla u(t)-\frac{i}{2}c u(t)\Bigr\|_{L^2}^2
  +\frac{\kappa}{6}\Bigl\|\nabla v(t)-\frac{i}{2\kappa}c v(t)\Bigr\|_{L^2}^2.
\end{align*}
By the conservation law of charge, we obtain the a priori estimate
\begin{align*}
  \|\nabla u(t)\|_{L^2}^2 +\kappa\|\nabla v(t)\|_{L^2}^2
  \lesssim{}&\Bigl\|\nabla u(t)-\frac{i}{2}cu(t)\Bigr\|_{L^2}^2
  +\|u(t)\|_{L^2}^2
\\&+\Bigl\|\nabla v(t)-\frac{i}{2\kappa}cv(t)\Bigr\|_{L^2}^2
  +\|v(t)\|_{L^2}^2
\\\lesssim{}& \mu(\om,c)+Q(u_0,v_0),
\end{align*}
which completes the proof.
\end{proof}

\begin{lemma}\label{lem:3.3}
Let $(\om,c)\in\R\times\R^d$ satisfy $\om>0$ and $c\ne0$. Then 
\[\mu(\om,c)
  =|c|^{6-d}\mu\Big(\frac{\om}{|c|^2},\frac{c}{|c|}\Big).
  \]
\end{lemma}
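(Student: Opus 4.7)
The plan is to exploit the intrinsic scaling symmetry of \eqref{NLS}, namely $(u,v)\mapsto(\lambda^2 u(\lambda^2 t,\lambda x),\lambda^2 v(\lambda^2 t,\lambda x))$, adapted to the stationary picture. Concretely, for $\lambda=|c|$ and $(\phi',\psi')$ defined on $\R^d$, set
\[
 \phi(x) \ce \lambda^{2}\phi'(\lambda x), \qquad \psi(x)\ce \lambda^{2}\psi'(\lambda x).
\]
A direct change of variables gives $\|\nabla\phi\|_{L^2}^2=\lambda^{6-d}\|\nabla\phi'\|_{L^2}^2$, $\|\phi\|_{L^2}^2=\lambda^{4-d}\|\phi'\|_{L^2}^2$, $(i\nabla\phi,\phi)_{L^2}=\lambda^{5-d}(i\nabla\phi',\phi')_{L^2}$, and $N(\phi,\psi)=\lambda^{6-d}N(\phi',\psi')$, with analogous relations for $\psi$. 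The key is the matching of exponents: since $\om=\lambda^2\om'$ and $c=\lambda c'$ with $c'=c/|c|$, the coefficients in $L_{\om,c}$ produce a common prefactor $\lambda^{6-d}$ on every term, so that
\[
 L_{\om,c}(\phi,\psi)=\lambda^{6-d}L_{\om',c'}(\phi',\psi'),\quad N(\phi,\psi)=\lambda^{6-d}N(\phi',\psi'),
\]
and consequently $S_{\om,c}(\phi,\psi)=\lambda^{6-d}S_{\om',c'}(\phi',\psi')$ and $K_{\om,c}(\phi,\psi)=\lambda^{6-d}K_{\om',c'}(\phi',\psi')$, where I write $\om'=\om/|c|^2$ and $c'=c/|c|$.

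Next I would check that the scaling is a bijection between the admissible constraint sets. Since $\phi\mapsto\lambda^2\phi(\lambda\cdot)$ preserves $H^1$ (and $\dot H^1$) up to a positive factor, and since the modulation $(u,v)\mapsto(e^{-\frac{i}{2}c\cdot x}u,e^{-\frac{i}{2\kappa}c\cdot x}v)$ in the definition \eqref{eq:1.8} of $X_{\om,c}$ is compatible with the rescaling (a brief computation shows $(e^{-\frac{i}{2}c\cdot x}\phi)(x)=\lambda^2(e^{-\frac{i}{2}c'\cdot y}\phi')(\lambda x)$, and similarly for $\psi$), the map $(\phi',\psi')\mapsto(\phi,\psi)$ sends $X_{\om',c'}$ onto $X_{\om,c}$ bijectively. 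Moreover, the condition $\om>\max\{|c|^2/4,|c|^2/(8\kappa)\}$ is equivalent to $\om'>\max\{1/4,1/(8\kappa)\}$, and the boundary cases in (B), (C) are preserved identically. In particular $\mathcal{K}_{\om',c'}$ corresponds under this scaling to $\mathcal{K}_{\om,c}$.

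Combining these two points: for every $(\phi',\psi')\in\mathcal{K}_{\om',c'}$, the rescaled pair $(\phi,\psi)$ lies in $\mathcal{K}_{\om,c}$ and satisfies $S_{\om,c}(\phi,\psi)=|c|^{6-d}S_{\om',c'}(\phi',\psi')$. Taking infima gives $\mu(\om,c)\le|c|^{6-d}\mu(\om',c')$. The reverse inequality follows by applying the inverse scaling $\lambda=|c|^{-1}$ to any $(\phi,\psi)\in\mathcal{K}_{\om,c}$, which produces an element of $\mathcal{K}_{\om',c'}$ and gives $|c|^{6-d}\mu(\om',c')\le\mu(\om,c)$. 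Hence equality.

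There is no real obstacle here; the only minor care required is verifying that the modulation-twisted function space $X_{\om,c}$ transforms correctly under the spatial scaling, which is the check above. Once one observes that the choice $\phi\mapsto\lambda^2\phi(\lambda\cdot)$ is precisely the $\dot H^{s_c}$-critical scaling of \eqref{NLS} (with $s_c=d/2-2$), every power of $\lambda$ in $L_{\om,c}$, $N$, and $K_{\om,c}$ collapses to the single factor $\lambda^{6-d}$, and the identity is immediate.
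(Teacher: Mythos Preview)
Your proof is correct and follows essentially the same scaling argument as the paper: both exploit the rescaling $(\phi,\psi)\mapsto(\lambda^2\phi(\lambda\cdot),\lambda^2\psi(\lambda\cdot))$ with $\lambda=|c|$ to relate $\mathcal{K}_{\om,c}$ and $\mathcal{K}_{\om',c'}$, obtain the common factor $|c|^{6-d}$ on $S_{\om,c}$ and $K_{\om,c}$, and then compare infima in both directions. The paper phrases this via minimizing sequences rather than constraint sets and omits your explicit check that $X_{\om,c}$ transforms correctly, but the substance is identical.
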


\begin{proof}
Let $\{(u_n,v_n)\}_{n\in\N}$ be a minimizing sequence for $\mu(\om,c)$, that is, 
\begin{align*}
  K_{\om,c}(u_n,v_n)
  =0,\quad
  S_{\om,c}(u_n,v_n)
  \to\mu(\om,c).
\end{align*}
Let $(w_n,z_n)\ce (|c|^{-2}u_n(x/|c|), |c|^{-2}v_n(x/|c|))$. Then 
\begin{align*}
  K_{ \om/|c|^2, c/|c| }(w_n,z_n)
  &=|c|^{d-6}K_{\om,c}(u_n,v_n)
  =0,
\\S_{\om/|c|^2,c/|c|}  (w_n,z_n) 
  &=|c|^{d-6}S_{\om,c}(u_n,v_n)
  \to |c|^{d-6}\mu(\om,c).
\end{align*}
This implies $\mu(\om,c)\ge|c|^{6-d}\mu(\om/|c|^2,c/|c|)$. Similarly, we obtain the inverse inequality. This completes the proof.
\end{proof}

\begin{remark} \label{rem:3.4}
We note that $\mu(\om,c)$ is independent on the angle of $c$. Indeed, let $c_1,c_2\in\R^d$ satisfy $|c_1|=|c_2|$. Then there exists an orthogonal matrix $R$ such that $Rc_2=c_1$. Let $\{(u_n,v_n)\}$ be a minimizing sequence for $\mu(\om,c_1)$, i.e., $(u_n,v_n)\in\ml{K}_{\om,c_1}$ and $S_{\om,c_1}(u_n,v_n)\to\mu(\om,c_1)$. We put $(w_n(x),z_n(x))\ce (u_n(Rx),v_n(Rx))$. Then 
\begin{align*}
  K_{\om,c_2}(w_n,z_n)
  &=K_{\om,c_1}(u_n,v_n)
  =0, 
\\S_{\om,c_2}(w_n,z_n)
  &=S_{\om,c_1}(u_n,v_n)
  \to\mu(\om,c_1). 
\end{align*}
This implies $\mu(\om,c_2)\le \mu(\om,c_1)$. Similarly, the inverse inequality holds.
\end{remark}

We are now in a position to complete the proof of Theorem~\ref{thm:1.3}.
\begin{proof}[Proof of Theorem~\ref{thm:1.3}]
(i) Let $A_0$ to be chosen later. We will show that if $(u_0,v_0)\in H^1(\R^4)$ satisfies $\|u_0\|_{L^2}^2< A_0$, then
\begin{align} \label{eq:3.1}
  S_{|c|^2/(8\kappa),c}(e^{\frac{i}{2}c\cdot x}u_0,e^{\frac{i}{2\kappa}c\cdot x}v_0)
  &\le\mu\Bigl(\frac{|c|^2}{8\kappa},c\Bigr),
\\\label{eq:3.2}
  K_{|c|^2/(8\kappa),c}(e^{\frac{i}{2}c\cdot x}u_0,e^{\frac{i}{2\kappa}c\cdot x}v_0)
  &\ge 0
  \end{align}
for large $|c|$. This implies that 
\begin{align*}
(e^{\frac{i}{2}c\cdot x}u_0 , e^{\frac{i}{2\kappa}c\cdot x}v_0)
  \in\scA_{|c|^2/(8\kappa),c}^+
\end{align*}
for large $|c|$, so the conclusion follows from Proposition \ref{prop:3.2}.

By using the expression 
\begin{align*}
  L_{|c|^2/(8\kappa), c}(u,v)
  =\frac{1}{2}\|\nabla( e^{-\frac{i}{2}c\cdot x}u)\|_{L^2}^2
  +|c|^2\frac{1-2\kappa}{16\kappa}\|u\|_{L^2}^2
  +\frac{\kappa}{2}\|\nabla (e^{-\frac{i}{2\kappa}c\cdot x}v)\|_{L^2}^2,
\end{align*}
and Lemma~\ref{lem:3.3}, we see that \eqref{eq:3.1} is equivalent to
\begin{equation}\label{eq:3.3}
\begin{aligned} 
  \frac{1}{2}\|\nabla u_0\|_{L^2}^2
  +\frac{\kappa}{2}\|\nabla v_0\|_{L^2}^2-N(e^{\frac{i}{2}c\cdot x}u_0,e^{\frac{i}{2\kappa}c\cdot x}v_0) \qquad
\\\le |c|^2\biggl\{\mu\Bigl(\frac{1}{8\kappa},\frac{c}{|c|}\Bigr)
  -\frac{1-2\kappa}{16\kappa}\|u_0\|_{L^2}^2\biggr\}.
\end{aligned}
\end{equation}
Now we put
\[A_0
  \ce \frac{16\kappa}{1-2\kappa}\mu\Bigl(\frac{1}{8\kappa},\frac{c}{|c|}\Bigr).
  \]
It follows from $0<\kappa<1/2$, Lemma~\ref{lem:2.5}, and Remark~\ref{rem:3.4} that $A_0$ is positive and independent of the choice of $c\ne 0$. Let $\|u_0\|_{L^2}^2< A_0$. Then the right-hand side of \eqref{eq:3.3} tends to infinity as $|c|\to\infty$. On the other hand, the Riemann--Lebesgue theorem implies that 
\begin{equation}\label{eq:3.4}
  N(e^{\frac{i}{2}c\cdot x}u_0,e^{\frac{i}{2\kappa}c\cdot x}v_0)
  =\Re\int_{\R^d}e^{i\l(1-\frac{1}{2\kappa}\r)c\cdot x}u_0^2\ol{v}_0\,dx
  \to 0
\end{equation}
as $|c|\to\infty$. This implies that the left-hand side of \eqref{eq:3.3} is bounded above as $|c|\to\infty$. Therefore, \eqref{eq:3.3} holds for large $|c|$, and so does \eqref{eq:3.1}. Moreover, by \eqref{eq:3.4} again, we obtain 
\begin{align*}
  &K_{|c|^2/(8\kappa),c}(e^{\frac{i}{2}c\cdot x}u_0,e^{\frac{i}{2\kappa}c\cdot x}v_0)
\\&\quad=\|\nabla u_0\|_{L^2}^2+\frac{|c|^2}{8\kappa}(1-2\kappa)\|u_0\|_{L^2}^2
+\kappa\|\nabla v_0\|_{L^2}^2-3N(e^{\frac{i}{2}c\cdot x}u_0,e^{\frac{i}{2\kappa}c\cdot x}v_0) \ge 0
\end{align*}
for large $|c|$, which implies \eqref{eq:3.2}. 
\\[3pt]
(ii) In a similar way, one can prove 
\begin{equation*}
  (e^{\frac{i}{2}c\cdot x}u_0, 
  e^{\frac{i}{2\kappa}c\cdot x}v_0)
  \in\scA_{|c|^2/4,c}^+
\end{equation*}
for large $|c|>0$, and the universal constant $B_0$ is determined by
\[B_0
  \ce \frac{8\kappa}{2\kappa-1}\mu\Bigl(\frac{1}{4},\frac{c}{|c|}\Bigr).
  \]
This completes the proof.
\end{proof}

In the end of this section, we briefly remark the relation between the charge condition \eqref{eq:1.13} and potential well theory. When $d=4$, it follows from the Pohozaev identity that $E(\phi_{\omega,0},\psi_{\omega,0})=0$. If $(u_0,v_0)\in H^1(\R^4)\times H^1(\R^4)$ satisfies \eqref{eq:1.13}, then we have
\begin{align*}
S_{\omega,0}(u_0,v_0)&<\mu(\omega,0)=\omega Q(\phi_{1,0},\psi_{1,0}),
\\
K_{\omega,0}(u_0,v_0)&\ge 0
\end{align*}
for large $\omega>0$. This yields that $(u_0,v_0)\in\scA^+_{\omega,0}$ for large $\omega>0$, which gives alternative proof of global existence under the condition \eqref{eq:1.13}. One can also prove that if $(u_0,v_0)\in H^1(\R^4)\times H^1(\R^4)$ satisfies \eqref{eq:1.14}, then $(u_0,v_0)\in\scA^-_{\omega,c}$ for small $\omega>0$ and small $|c|>0$ but $(u_0,v_0)\notin\scA^+_{\omega,c}$ any $(\omega,c)$ satisfying \eqref{eq:1.10}. Therefore, the threshold value $Q(\phi_{1,0},\psi_{1,0})$ gives a turning point in the structure of potential wells. We refer to \cite{H21} for the related results of nonlinear Schr\"odinger equations of derivative type.

\appendix

\section{Nonexistence results for $\kappa=1/2$}
\label{sec:A}

In this section, we show that nonexistence of nontrivial solutions for \eqref{eq:1.5} with $\kappa=1/2$ and $\om=|c|^2/4$.

\begin{proposition}
Let $1\le d\le 5$, $\kappa=1/2$, and $\om=|c|^2/4$. If $(\phi,\psi)$ is a solution of \eqref{eq:1.5} satisfying $(e^{-\frac{i}{2}c\cdot x}\phi, 
e^{-\frac{i}{2\kappa}c\cdot x}\psi)\in \dot{H}^1(\R^d)\times \dot{H}^1(\R^d)$ and $\phi^2\ol{\psi}\in L^1(\R^d)$, then $(\phi,\psi)=(0,0)$. 
\end{proposition}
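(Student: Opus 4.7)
The plan is to reduce to a zero-mass system via the gauge ansatz of Section~\ref{sec:2}, then to combine Nehari and Pohozaev identities for the transformed system to produce a rigid algebraic constraint. Set
\[\til\phi\ce e^{-\frac{i}{2}c\cdot x}\phi,\qquad \til\psi\ce e^{-\frac{i}{2\kappa}c\cdot x}\psi=e^{-ic\cdot x}\psi.\]
At $\kappa=1/2$ and $\om=|c|^2/4=|c|^2/(8\kappa)$, both mass coefficients in \eqref{eq:1.11} vanish simultaneously and the oscillating factor $e^{i(\frac{1}{2\kappa}-1)c\cdot x}$ is identically $1$, so $(\til\phi,\til\psi)$ satisfies the zero-mass system
\[-\Delta\til\phi=2\til\psi\ol{\til\phi},\qquad -\tfrac{1}{2}\Delta\til\psi=\til\phi^2\quad\text{on }\R^d,\]
with $(\til\phi,\til\psi)\in\dot H^1\times\dot H^1$ and $\til\phi^2\ol{\til\psi}\in L^1(\R^d)$ by hypothesis.

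I would then extract two pairs of scalar relations. Multiplying the first equation by $\ol{\til\phi}$ and the second by $\ol{\til\psi}$, integrating, and taking real parts produces the \emph{Nehari identities} $\|\nabla\til\phi\|_{L^2}^2=\|\nabla\til\psi\|_{L^2}^2=2N$, where $N\ce\Re\int_{\R^d}\til\phi^2\ol{\til\psi}\,dx$. Multiplying instead by the \emph{Pohozaev multipliers} $x\cdot\nabla\ol{\til\phi}$ and $x\cdot\nabla\ol{\til\psi}$, integrating by parts (using $2\ol{\til\phi}(x\cdot\nabla\ol{\til\phi})=x\cdot\nabla\ol{\til\phi}^2$ for the first equation and moving the $x\cdot\nabla$ onto $\ol{\til\phi}^2$ by the divergence theorem), and setting $I\ce\Re\int_{\R^d}\ol{\til\phi}^2(x\cdot\nabla\til\psi)\,dx$ yields
\[\tfrac{d-2}{2}\|\nabla\til\phi\|_{L^2}^2=dN+I,\qquad \tfrac{d-2}{2}\|\nabla\til\psi\|_{L^2}^2=-2I.\]
Adding the first to one half of the second eliminates $I$, and substituting the Nehari identities collapses everything to $\tfrac{d-6}{2}N=0$. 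For $1\le d\le 5$ this forces $N=0$, hence $\|\nabla\til\phi\|_{L^2}=\|\nabla\til\psi\|_{L^2}=0$, and so $(\phi,\psi)=(0,0)$.

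The main technical obstacle is justifying the Pohozaev identities under the weak hypotheses given: the multiplier $x\cdot\nabla\ol{\til\phi}$ is unbounded, and neither $\dot H^1$ membership nor $\til\phi^2\ol{\til\psi}\in L^1$ alone supplies the pointwise integrability needed for a direct integration by parts. My plan is first to upgrade regularity by elliptic bootstrap, using Sobolev embeddings to place $\til\psi\ol{\til\phi}$ and $\til\phi^2$ in Lebesgue spaces suitable for $L^p$-estimates on $-\Delta$, and then to insert a radial cutoff $\chi_R(x)=\chi(x/R)$ into the Pohozaev multipliers, integrate by parts on the truncated problem, and pass $R\to\infty$; the boundary and commutator contributions vanish in the limit thanks to $\til\phi^2\ol{\til\psi}\in L^1$ together with the square integrability of $\nabla\til\phi$ and $\nabla\til\psi$.
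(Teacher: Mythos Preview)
Your proposal is correct and follows essentially the same approach as the paper: reduce to the zero-mass system \eqref{eq:A.1} via the gauge transform, then combine the Nehari identity with a Pohozaev identity to force $(d-6)N=0$. The only cosmetic difference is that the paper obtains the Pohozaev relation by differentiating $E(\til\phi^\lam,\til\psi^\lam)$ at $\lam=1$ under the $L^2$-invariant scaling $f^\lam(x)=\lam^{d/2}f(\lam x)$, which is equivalent to your $x\cdot\nabla$ multiplier computation (and sidesteps the auxiliary quantity $I$); the paper, like you, leaves the rigorous justification of the Pohozaev step implicit.
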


\begin{proof}
We set 
 $
 (\til{\phi},\til{\psi})
  \ce(e^{-\frac{i}{2}c\cdot x}\phi, 
  e^{-\frac{i}{2\kappa}c\cdot x}\psi).
 $
By $\kappa=1/2$ and $\om=|c|^2/4$, we see that $(\til{\phi},\til{\psi})$ is a solution of the system
\begin{align}\label{eq:A.1}
    \left\{\begin{aligned}
    -\Del\til{\phi}
    -2\til{\psi}\ol{\til{\phi}} 
    &=0,
\\  -\tfrac12\Del\til{\psi}
    -\til{\phi}^2 
    &=0,
    \end{aligned}\right.
    \quad x\in\R^d.
\end{align}
This is equivalent to $E'(\til{\phi},\til{\psi})=0$. Therefore, we have 
\begin{equation} \label{eq:A.2}
  0=\langle E'(\til{\phi},\til{\psi}),(\til{\phi},\til{\psi})\rangle
  =\|\nabla \til{\phi}\|_{L^2}^2
  +\frac12\|\nabla\til{\psi}\|_{L^2}^2
  -3\Re\int_{\R^d}\til{\phi}^2\ol{\til{\psi}}\,dx.
\end{equation}
Now let $f^\lam(x)\ce \lam^{d/2}f(\lam x)$ for $\lambda>0$, and 
we see that
\begin{equation}\label{eq:A.3}
    \begin{aligned}
    0
   &=\langle E'(\til{\phi},\til{\psi}),\pt_\lam(\til{\phi}^\lam,\til{\psi}^\lam)|_{\lam=1}\rangle
    =\pt_\lam E(\til{\phi}^\lam,\til{\psi}^\lam)|_{\lam=1}
\\ &=\pt_\lam\bigg(\frac{\lam^2}{2}\|\nabla \til{\phi}\|_{L^2}^2
    +\frac{\lam^2}{4}\|\nabla\til{\psi}\|_{L^2}^2
    -\lam^{d/2}\Re\int_{\R^d}\til{\phi}^2\ol{\til{\psi}}\,dx\bigg)\bigg|_{\lam=1}
\\ &=\|\nabla \til{\phi}\|_{L^2}^2
    +\frac12\|\nabla\til{\psi}\|_{L^2}^2
    -\frac{d}{2}\Re\int_{\R^d}\til{\phi}^2\ol{\til{\psi}}\,dx.
    \end{aligned}
\end{equation}
Combining \eqref{eq:A.2} and \eqref{eq:A.3}, we have
\[(d-6)\Big(\|\nabla \til{\phi}\|_{L^2}^2
  +\frac12\|\nabla\til{\psi}\|_{L^2}^2\Big)
  =0. \]
Since $d\le 5$, we obtain that $(\til\phi,\til\psi)=(0,0)$. 
\end{proof}

\section{Long time perturbation from semitrivial solutions}
\label{sec:B}

The system \eqref{NLS} has semitrivial solutions in the form of $(0, e^{it\kappa\Delta}v_0)$. When $d=4$, we have the following global result (see \cite{HM21} when $d=3$).

\begin{theorem}
\label{thm:B.1}
For any $v_0 \in H^1(\mathbb{R}^4)$, 
there exists $\varepsilon = \varepsilon(\|v_0\|_{L^2})>0$ such that if $u_0\in H^1(\R^4)$ satisfying $\|e^{it\Delta}u_0\|_{L_{t,x}^{3}(\mathbb{R}\times \mathbb{R}^{4})}<\varepsilon$, then the $H^1$-solution of \eqref{NLS} with $(u(0),v(0))=(u_0,v_0)$ exists globally in time.
\end{theorem}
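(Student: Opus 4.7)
The idea is to view $(u,v)$ as a long-time perturbation of the semitrivial exact solution $(0, e^{it\kappa\Delta}v_0)$. Setting $\til{v}(t):=e^{it\kappa\Delta}v_0$ and $w := v - \til{v}$, the pair $(u, w)$ satisfies
\[
iu_t + \Delta u = -2\bar{u}\til{v} - 2\bar{u}w, \qquad iw_t + \kappa\Delta w = -u^2,
\]
with initial data $(u_0, 0)$. In the $L^2$-critical dimension $d=4$ the pair $(3,3)$ is $L^2$-admissible, so Strichartz gives $\|\til{v}\|_{L^3_{t,x}(\R\times\R^4)} \leq C\|v_0\|_{L^2} =: M$. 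We therefore partition $\R$ into $J = J(M)$ consecutive intervals $I_1, \dots, I_J$ on each of which $\|\til{v}\|_{L^3_{t,x}(I_j\times\R^4)} < \eta$ for a small absolute constant $\eta > 0$ to be chosen.

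The core of the argument is a bootstrap on each $I_j$ based on the Duhamel formulas and Strichartz, using the H\"older bounds
\[
\|\bar u(\til v + w)\|_{L^{3/2}_{t,x}(I_j)} \leq \|u\|_{L^3_{t,x}(I_j)}\bigl(\|\til v\|_{L^3_{t,x}(I_j)} + \|w\|_{L^3_{t,x}(I_j)}\bigr), \quad \|u^2\|_{L^{3/2}_{t,x}(I_j)} \leq \|u\|_{L^3_{t,x}(I_j)}^2,
\]
together with their Leibniz analogues involving $\nabla$. Inductively one shows that $\|u\|_{L^3_{t,x}(I_j)} = O(\varepsilon)$, $\|w\|_{L^\infty_t L^2_x(I_j) \cap L^3_{t,x}(I_j)} = O(\varepsilon^2)$, and $(u, v)$ is uniformly bounded in $L^\infty_t H^1_x(I_j)$. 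The transfer to the next interval proceeds via
\[
\|e^{i(t-t_j)\Delta}u(t_j)\|_{L^3_{t,x}(I_{j+1})} \leq \|e^{it\Delta}u_0\|_{L^3_{t,x}(I_{j+1})} + C\|\bar u(\til v + w)\|_{L^{3/2}_{t,x}([0, t_j])},
\]
and the analogous bound for $w$ (with $\|w(t_j)\|_{L^2} \lesssim \|u\|_{L^3_{t,x}([0,t_j])}^2$), whose right-hand sides are dominated by the induction hypothesis. Since $J$ depends only on $M$, choosing $\varepsilon = \varepsilon(\|v_0\|_{L^2})$ small enough keeps the bootstrap closed across all intervals, producing uniform $H^1$ bounds on $\R$; global existence then follows from the blow-up alternative for the local theory in \cite{HOT13}.

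The main obstacle is that the $H^1$ Leibniz estimate produces terms like $\|\nabla u\|_{L^3_{t,x}(I_j)}\|v\|_{L^3_{t,x}(I_j)}$, and $\|v\|_{L^3_{t,x}(I_j)} \leq \|\til v\|_{L^3_{t,x}(I_j)} + \|w\|_{L^3_{t,x}(I_j)}$ is small only thanks to the partition (the global norm $\|\til v\|_{L^3_{t,x}(\R\times\R^4)}$ can be large). This is precisely why an interval partition depending on $\|v_0\|_{L^2}$ is needed, and correspondingly why $\varepsilon$ depends on $\|v_0\|_{L^2}$ rather than being universal.
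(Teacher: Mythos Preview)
Your plan is correct and mirrors the paper's proof: perturb around the semitrivial solution $(0,e^{it\kappa\Delta}v_0)$, partition time into $J=J(\|v_0\|_{L^2})$ intervals on which $\|\til v\|_{L^3_{t,x}}$ is small, and iterate a Strichartz--H\"older bootstrap across the intervals with $\varepsilon$ chosen small relative to $J$. The only cosmetic difference is that the paper first closes the argument at the $L^2$ level (obtaining $\|(u,v)\|_{L^3_{t,x}(\R)}<\infty$) and then performs a second partition to propagate the $\dot H^1$ bounds, whereas you treat the $H^1$ Leibniz terms on the original partition; both routes are standard and equivalent here.
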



This result follows from the long time perturbation lemma in \cite[Theorem 2.3]{IKN19}. However, we give a direct proof of Theorem \ref{thm:B.1} for the reader's convenience. 

\begin{proof}
Given $u_0,v_0 \in H^1(\mathbb{R}^4)$. We only consider the positive time direction. For short, we use the abbreviated notation
\begin{align*}
  \|u\|_{L_{t, x}^3(I)}
  = \|u\|_{L_{t}^3(I, L_{x}^3)},\quad 
  \|(u, v)\|_{L_{t, x}^3(I)}
  = \|(u, v)\|_{L_{t}^3(I, L_{x}^3\times L_{x}^3)}. 
\end{align*}
We set $t_0=0$ and $I=[0,\infty)$. We split $I$ into a finite number of intervals $I_j=[t_j,t_{j+1}) \subset I$ ($j=0, 1, \dots, J_1-1$) such that $\|e^{it\kappa \Delta}v_0\|_{L_{t,x}^3(I_j)} < \delta$, where $\delta$ is determined later.
We set 
\begin{align*}
    \tilde{v}\ce e^{it\kappa \Delta}v_0,\quad 
    w_1\ce u,\quad 
    w_2\ce v-\tilde{v},
\end{align*}
and
\begin{align*}
    \Phi_{1}(w_1,w_2)
    &\ce e^{i(t-t_j)\Delta}w_1(t_j)+ 2 i \int_{t_j}^{t} e^{i(t-s)\Delta} (\overline{w_1}(w_2+\tilde{v}))(s)ds,
    \\
    \Phi_{2}(w_1,w_2)
    &\ce e^{i(t-t_j)\kappa\Delta}w_2(t_j)+ i \int_{t_j}^{t} e^{i(t-s)\kappa\Delta} (w_1^2)(s)ds
\end{align*}
for $t \in I_j$. 
We set $K_j\ce\|e^{i(t-t_j)\Delta}w_1(t_j)\|_{L_{t,x}^3(I_j)} + \|e^{i(t-t_j)\kappa\Delta}w_2(t_j)\|_{L_{t,x}^3(I_j)}$. We note that 
\begin{align*}
K_0=\norm[e^{it\Delta}w_1(0)]_{L_{t,x}^3(I_0)} + 
\norm[e^{it\kappa\Delta}w_2(0)]_{L_{t,x}^3(I_0)}=\norm[e^{it\Delta}u_0]_{L_{t,x}^3(I_0)}
<\eps,
\end{align*}
where $\eps>0$ is determined later.
We define the set $E_j$ by
\begin{align*}
E_j=\{(w_1,w_2)\in L_t^\infty (I_j, L^2(\mathbb{R}^4)) 
	: \|w_1\|_{L_{t,x}^{3}(I_j)} + \|w_2\|_{L_{t,x}^{3}(I_j)} \leq 4 K_j \}.
\end{align*} 

Let $(w_1,w_2)\in E_j$. By the Strichartz estimate and the H\"{o}lder inequality, we have
\begin{align*}
	\|\Phi_{1}(w_1,w_2)\|_{L_{t,x}^3(I_j)}
	&\leq \|e^{i(t-t_j)\Delta}w_1(t_j)\|_{L_{t,x}^3(I_j)} + 
	C\|\overline{w_1}(w_2+\tilde{v})\|_{L_{t,x}^{3/2}(I_j)}
	\\
	&\leq K_j + C\|w_1\|_{L_{t,x}^{3}(I_j)} (\|w_2\|_{L_{t,x}^{3}(I_j)}+ \|\tilde{v}\|_{L_{t,x}^{3}(I_j)})
	\\
	&\le(1+16CK_j+4C\delta) K_j
\end{align*}
and
\begin{align*}
	\|\Phi_{2}(w_1,w_2)\|_{L_{t,x}^3(I_j)}
	\leq K_j + C\|w_1^2\|_{L_{t,x}^{3/2}(I_j)}
	\leq K_j+ 16CK_j^2.
\end{align*}
Therefore we obtain
\begin{align*}
	\|\Phi_{1}(w_1,w_2)\|_{L_{t,x}^3(I_j)}+\|\Phi_{2}(w_1,w_2)\|_{L_{t,x}^3(I_j)}
	\leq (2+32CK_j+4C\delta)K_j.
\end{align*}
If we take $K_j$ and $\delta$ such that $32CK_j<1$ and $4C\delta<1$, then $\Phi=(\Phi_1,\Phi_2)$ maps $E_j$ to itself.
Similarly, we have
\begin{align*}
	\|\Phi(w_1,w_2) - \Phi(w'_1,w'_2)\|_{L_{t,x}^3(I_j)}
	\leq \frac{1}{2} \|(w_1,w_2) - (w'_1,w'_2)\|_{L_{t,x}^3(I_j)}.
\end{align*}
Therefore, by Banach's fixed-point theorem, $\Phi$ has a unique fix point $(w_1,w_2)\in E_j$, i.e., $\Phi(w_1,w_2)=(w_1,w_2)$ on $I_j$. From the integral equation, 
we have
\begin{align*}
	e^{i(t-t_{j+1})\Delta}w_1(t_{j+1})
	&=e^{i(t-t_j)\Delta}w_1(t_j)+ 2 i \int_{t_j}^{t_{j+1}} e^{i(t-s)\Delta} (\overline{w_1}(w_2+\tilde{v}))(s)ds,
	\\
	e^{i(t-t_{j+1})\Delta}w_2(t_{j+1})
	&=e^{i(t-t_j)\kappa\Delta}w_2(t_j)+ i \int_{t_j}^{t_{j+1}} e^{i(t-s)\kappa\Delta} (w_1^2)(s)ds.
\end{align*}
The above argument show that $K_{j+1} \leq 4 K_j$.
Iterating this, we get $K_j \leq 4^j K_0 <4^j \varepsilon$. Here we take $\varepsilon>0$ such that $4^{J_1} \varepsilon < 1/32$. Then the above argument works and we get the global $L^2$-solution on $I$. Moreover, we obtain
\begin{align*}
	\|(u,v)\|_{L_{t,x}^3(I)} 
	\leq \|(w_1,w_2)\|_{L_{t,x}^3(I)} + \|(0,\tilde{v})\|_{L_{t,x}^3(I)}
	\leq C_0 < \infty. 
\end{align*}

Finally, we give the a priori estimate on $L^{\infty}_t\dot{H}^1$. Since $\|(u,v)\|_{L_{t,x}^3(I)} <\infty$, we similarly split $I$ into a finite number of time intervals $I_j=[\tau_{j},\tau_{j+1})$ ($j=0, \dots, J_2-1$) such that $\|(u,v)\|_{L_{t,x}^3(I_j)} <\widetilde{\delta}$, where $\widetilde{\delta}$ is determined later. By the Duhamel formula and the Strichartz estimate, we obtain
\begin{align*}
\norm[(u,v)]_{L^3_t\dot{W}^{1,3}_x(I_j)}
&\le C \|(u,v)(\tau_{j})\|_{ \dot{H}^1} + 
\widetilde{\delta} C_1 \|(u,v)\|_{L^3_t \dot{W}^{1,3}_x(I_j)}.
\end{align*}
If we take $\widetilde{\delta}$ such that $\widetilde{\delta} C_1<1/2$, we get $\norm[(u,v)]_{L^{3}_t\dot{W}^{1,3}_x(I_j)} \leq 2C \|(u,v)(\tau_{j})\|_{ \dot{H}^1}$. Therefore, we obtain
\begin{align*}
	\norm[(u,v)]_{L^{\infty}_t\dot{H}^1(I_j)}
	&\leq \|(u,v)(\tau_{j})\|_{\dot{H}^1} + \widetilde{\delta} C_2 \norm[(u,v)]_{L^{3}_t\dot{W}^{1,3}_x(I_j)}
	\\
	&\leq (1+2\widetilde{\delta}C C_2)\|(u,v)(\tau_{j})\|_{\dot{H}^1}.
\end{align*}
Iterating this, we obtain $\norm[(u,v)]_{L^{\infty}_t\dot{H}^1(I_j)} \leq (1+2\widetilde{\delta}C C_2)^{j+1}\|(u,v)(0)\|_{\dot{H}^1}$. Hence, we get
\begin{align*}
	\norm[(u,v)]_{L^{\infty}_t\dot{H}^1(I)} 
  \le (1+2\widetilde{\delta}C C_2)^{J_2+1}\|(u,v)(0)\|_{\dot{H}^1}<\infty,
\end{align*}
which is the desired estimate. This completes the proof.
\end{proof}


\section*{Acknowledgments}

N.F. was supported by JSPS KAKENHI Grant Number JP20K14349 and  M.H. by JSPS KAKENHI Grant Number JP19J01504. T.I. deeply appreciates the support by JSPS KAKENHI Grant-in-Aid for Early-Career Scientists No. JP18K13444 and Overseas Research Fellowship.

\subsection*{Data Availability}

No data are analyzed in this study.

\subsection*{Conflict of interest} The authors declare that there is no conflict of interest.


\begin{thebibliography}{99}

\bibitem{A76}
T. Aubin, Probl\`emes isop\'erim\'etriques et espaces de Sobolev, J. Differential Geometry {\bf11} (1976), 573--598.


\bibitem{BeL83}
H. Berestycki, P.-L. Lions, Nonlinear scalar field equations. I. Existence of a ground state, Arch. Rational Mech. Anal. {\bf 82} (1983), 313--345.

\bibitem{BL84} H. Brezis, E. H. Lieb, Minimum action solutions of some vector field equations, Comm. Math. Phys. {\bf96} (1984), 97--113.

\bibitem{C03}  T. Cazenave, {\it Semilinear Schr\"{o}dinger Equations}, Courant Lecture Notes in Math. vol.{\bf 10}, Amer. Math. Soc., 2003.

\bibitem{CW92}
T. Cazenave, F. B. Weissler, 
Rapidly decaying solutions of the nonlinear Schr\"{o}dinger equation. Comm. Math. Phys. {\bf147} (1992), 75--100.

\bibitem{CCO09}
M. Colin, T. Colin, M. Ohta, Stability of solitary waves for a system of nonlinear {S}chr\"{o}dinger equations with three wave interaction, Ann. Inst. H. Poincar\'{e} C Anal. Non Lin\'{e}aire {\bf 26} (2009), 2211--2226.

\bibitem{CMS16} M. Colin, L. Di Menza, J. C. Saut, Solitons in quadratic media, Nonlinearity {\bf29} (2016), 1000--1035.

\bibitem{CO06} M. Colin, M. Ohta, Stability of solitary waves for derivative nonlinear Schr\"{o}dinger equation, Ann. Inst. H. Poincar\'{e} Anal. Non Lin\'eaire {\bf 23} (2006), 753--764.

\bibitem{DD02} M. Del Pino, J. Dolbeault, Best constants for Gagliardo-Nirenberg inequalities and applications to nonlinear diffusions, J. Math. Pures Appl. {\bf 81} (2002), 847--875.

\bibitem{DF21} V. D. Dinh, L. Forcella, Blow-up results for systems of nonlinear Schr\"odinger equations with quadratic interaction, Z. Angew. Math. Phys. {\bf72} (2021), 26 pp.

\bibitem{FH21} N. Fukaya, M. Hayashi, Instability of algebraic standing waves for nonlinear Schr\"{o}dinger equations with double power nonlinearities, Trans. Amer. Math. Soc. {\bf 374} (2021), 1421--1447. 

\bibitem{FHI17}
N. Fukaya, M. Hayashi, T. Inui, 
A sufficient condition for global existence of solutions to a generalized derivative nonlinear {S}chr\"{o}dinger equation, Anal. PDE \textbf{10} (2017), 1149--1167.

\bibitem{GMXZ21}
C. Gao, F. Meng, C. Xu, J. Zheng,
Scattering theory for quadratic nonlinear Schr\"{o}dinger system in dimension six,
preprint, arXiv:2107.05299.

\bibitem{H18}
M. Hamano,
Global dynamics below the ground state for the quadratic Sch\"{o}dinger system in 5d, preprint, arXiv:1805.12245. 

\bibitem{HIN21}
M. Hamano, T. Inui, K. Nishimura, 
Scattering for the quadratic nonlinear Schr\"{o}dinger system in ${\bf R}^5$ without mass-resonance condition,
Funkcial. Ekvac. {\bf64} (2021), 261--291.

\bibitem{HM21}
M. Hamano, S. Masaki, A sharp scattering threshold level for mass-subcritical nonlinear Schr\"{o}dinger system, 
Discrete Contin. Dyn. Syst. {\bf41} (2021), 1415--1447.

\bibitem{H21} M. Hayashi, Potential well theory for the derivative nonlinear Schr\"{o}dinger equation,  Anal. PDE {\bf 14} (2021), 909--944.

\bibitem{HOT13} N. Hayashi, T. Ozawa, K. Tanaka, On a system of nonlinear Schr\"odinger equations with quadratic interaction, Ann. Inst. H. Poincar\'e Anal. Non Lin\'eaire, {\bf30} (2013), 661--690.

\bibitem{IKN19} T. Inui, N. Kishimoto, K. Nishimura, Scattering for a mass critical NLS system below the ground state with and without mass-resonance condition, Discrete Contin. Dyn. Syst. {\bf 39} (2019), 6299--6353.

\bibitem{IKN20} T. Inui, N. Kishimoto, K. Nishimura, Blow-up of the radially symmetric solutions for the quadratic nonlinear Schr\"odinger system without mass-resonance, Nonlinear Anal. {\bf198} (2020), 111895, 10pp.


\bibitem{LN20} M. Lewin, S. R. Nodari, The double-power nonlinear Schr\"odinger equation and its generalizations: uniqueness, non-degeneracy and applications, Calc. Var. Partial Differential Equations {\bf 59} (2020), 49 pp.


\bibitem{KW18} S. Kwon, Y. Wu, Orbital stability of solitary waves for derivative nonlinear Schr\"{o}dinger equation, J. Anal. Math. {\bf 135} (2018), 473--486; Erratum: see arXiv:1603.03745, last revised on Oct. 30, 2019.

\bibitem{L83b} E. H. Lieb, On the lowest eigenvalue of the Laplacian for the intersection of two domains, Invent. Math. {\bf 74} (1983), 441--448.

\bibitem{MP90} F. Merle, L. A. Peletier, Asymptotic behaviour of positive solutions of elliptic equations with critical and supercritical growth. I. The radial case. Arch. Ration. Mech. Anal. {\bf112} (1990), 1--19.


\bibitem{MM14} V. Moroz, C. B. Muratov, Asymptotic properties of ground states of scalar field equations with a vanishing parameter, J. Eur. Math. Soc. (JEMS) {\bf16} (2014), 1081--1109.

\bibitem{PS75}
L. E. Payne, D. H. Sattinger, Saddle points and instability of nonlinear hyperbolic equations, Israel J. Math. {\bf 22} (1975), 273--303. 

\bibitem{SV81} W. A. Strauss, L. V\'{a}zquez, Existence of localized solutions for certain model field theories, J. Math. Phys. {\bf22} (1981), 1005--1009.

\bibitem{T76} G. Talenti, Best constant in Sobolev inequality, 
Ann. Mat. Pura Appl. {\bf 110} (1976), 353--372. 

\bibitem{V81}
L. V\'{e}ron, 
Comportement asymptotique des solutions d\'{e}quations elliptiques semi-lin\'{e}aires dans ${\bf R}^{N}$, 
Ann. Mat. Pura Appl. {\bf127} (1981), 25--50.

\bibitem{W82} M. I. Weinstein, Nonlinear Schr\"{o}dinger equations and sharp interpolation estimates, Comm. Math. Phys. {\bf 87} (1982), 567--576.

\bibitem{W86} M. I. Weinstein, On the structure and formation of singularities in solutions to nonlinear dispersive evolution equations, Comm. Partial Differential Equations {\bf11} (1986), 545--565. 

\bibitem{Wu15} Y. Wu, Global well-posedness on the derivative nonlinear Schr\"{o}dinger equation,
Anal. PDE {\bf 8} (2015), 1101--1112. 

\end{thebibliography}
\end{document}